\documentclass[hidelinks,onefignum,onetabnum]{siamart251216}

\usepackage{enumerate}
\usepackage{amsfonts}
\usepackage{graphicx}
\usepackage{subcaption} 
\usepackage{epstopdf}
\usepackage{xcolor}
\usepackage{algorithm}
\usepackage{algorithmic}

\usepackage{multirow}

\ifpdf
  \DeclareGraphicsExtensions{.eps,.pdf,.png,.jpg}
\else
  \DeclareGraphicsExtensions{.eps}
\fi

\usepackage{booktabs}

\newcommand{\R}{\mathbb R}

\newcommand{\mcM}{\mathcal M}

\newcommand{\St}{\operatorname{St}}

\newcommand{\argmin}{\operatorname*{arg\,min}}

\newcommand{\sym}{\operatorname{sym}}
\newcommand{\Skew}{\operatorname{skew}} 
\newcommand{\SPD}{\operatorname{SPD}}
\newcommand{\dist}{\operatorname{dist}}
\newcommand{\Exp}{\operatorname{Exp}}
\newcommand{\Log}{\operatorname{Log}}
\newcommand{\Cay}{\operatorname{Cay}}
\newcommand{\id}{\operatorname{id}}

\DeclareMathOperator{\diag}{diag}
\makeatletter
\renewcommand*\env@matrix[1][*\c@MaxMatrixCols c]{%
  \hskip -\arraycolsep
  \let\@ifnextchar\new@ifnextchar
  \array{#1}}
\makeatother

\newcount\Comments  
\newcommand{\kibitz}[2]{\ifnum\Comments=1\textcolor{#1}{#2}\fi}
\definecolor{ralfs_green}{RGB}{47,130,100}

\newsiamremark{remark}{Remark}
\newsiamremark{hypothesis}{Hypothesis}
\crefname{hypothesis}{Hypothesis}{Hypotheses}
\newsiamthm{claim}{Claim}
\newsiamremark{fact}{Fact}
\crefname{fact}{Fact}{Facts}

\headers{A new polar factor retraction}{R. Jensen, and R. Zimmermann}

\title{An new polar factor retraction on the Stiefel manifold with closed-form inverse\thanks{Submitted to the editors DATE.
\funding{This work was supported by the Independent Research Foundation Denmark, DFF, grant nr. 3103-00094B}}}

\author{Rasmus Jensen\thanks{Department of  Mathematics and Computer Science, SDU Odense 
  (\email{rasmusj@imada.sdu.dk}, \email{zimmermann@imada.sdu.dk}).}
\and Ralf Zimmermann\footnotemark[2]}

\usepackage{amsopn}

\ifpdf
\hypersetup{
  pdftitle={A new Stiefel retraction with closed-form inverse},
  pdfauthor={R. Jensen, and Ralf Zimmermann}
}
\fi

\begin{document}

\maketitle

\begin{abstract}
Retractions are the workhorses in Riemannian computing applications, where computational efficiency is of the essence. This work introduces a new retraction on the compact Stiefel manifold of orthogonal frames. The retraction is second-order accurate under the Euclidean metric and features a closed-form inverse that can be efficiently computed. 

A variety of retractions is known on the Stiefel manifold, including the Riemannian exponential map, the polar factor retraction, the QR-retraction, quasi--geodesics and the Cayley retraction. The Cayley retraction is second--order accurate under the canonical metric and features a closed-form inverse.
The new retraction is the first one with the corresponding features under the Euclidean metric.

We present numerical experiments which illustrates the properties of the new retraction, as well as compare it to numerous of the currently available alternatives. In addition, we examine the performance of the retraction when used for interpolation and for computing a Riemannian barycenter. 
\end{abstract}

\begin{keywords}
Stiefel manifold, retraction, manifold interpolation, manifold optimization, local coordinates, Riemannian exponential, geodesics, Riemannian computing
\end{keywords}

\begin{MSCcodes}
  15A16, 
  15B10, 
    53Z50, 
  65D05, 
  65F60  
\end{MSCcodes}

\section{Introduction}
Practical data processing on manifolds requires local coordinates, which make it possible to map data `there and back':
‘There’ means mapping data from a Euclidean coordinate domain, for example on the tangent space, to the manifold.
‘Back’ refers to the reverse action of mapping manifold data into a coordinate domain.
In certain applications, e.g., Riemannian optimization, only the `there'-direction is needed, but for manifold interpolation or for computing Riemannian averages, there must also be an efficient way to go `back'.

This work focuses on the Stiefel manifold of orthogonal frames. Popularized by \cite{EdelmanAriasSmith1999}, the Stiefel manifold features in a huge amount of applications in deep learning \cite{Massart:2023},
 computer vision \cite{Ma:1998,Lui:2012,Turaga_2011}, statistics, signal processing and clustering \cite{Cai2023,Chakraborty2018,Chen2021,Cichocki2002,Kaneko:2013,Pennec2006,Pealat2023,Tian2021}, stochastic differential equations \cite{Marjanovic2016,Marjanovic2017}, and general numerical linear algebra \cite{boumal2015rtrmcextended,sato2021riemannian}, and the amount of literature is growing. 

Structure--preserving interpolation of column-orthonormal matrices, hence interpolation on the Stiefel manifold, has mainly been applied in the context of parametric model-order reduction \cite{ ElOmari:2025,Friderikos2021,Zimmermann2018}. The task is, given measurements of the system at selected parameters, to produce a low--rank basis representing the dynamics of the system under varying parameter configurations. 

Classically, interpolation tasks on manifolds have been carried out using the Riemannian normal coordinates, i.e., the Riemannian exponential and logarithm maps, which enjoy desirable geometric features. The Riemannian logarithm maps the sampled data to the tangent space at a designated point (the `back'), interpolation takes place in this linear space, and the interpolant is mapped back to the manifold via the Riemannian exponential (the `there'). 

In applications where computational efficiency is essential, one often employs approximations of the Riemannian exponential map, called retractions. By definition, all retractions match the Riemannian exponential up to terms of (at least) first--order. Thus, they are local diffeomorphisms, and their inverses approximate the Riemannian logarithm. 

Closed--form formulas for inverse retractions are relevant for vector transport, i.e., mapping vectors from one tangent space to another. For example, this is required  in the Riemannian conjugate--gradient methods \cite{Zhu2020}. Inverse retractions are also needed for  Riemannian interpolation \cite{jensen2025maxvol,SeguinKressner:2024}. 

A {\em second--order retraction} is a retraction that matches the Riemannian exponential map up to terms of second order. Employing such retractions simplifies the convergence analysis of Riemannian trust--region methods \cite[Section 6.4]{boumal2023}, since they preserve, in a certain sense, the Riemannian Hessian \cite[Proposition 5.45]{boumal2023}. Using higher--order retractions also increases regularity of subdivision schemes \cite{Duchamp:2013,Yazdani:2011}, and often they allow for larger time steps when solving ODEs on manifolds \cite{Gawlik:2018}.

Various retractions are known on the Stiefel manifold; we are aware of:
\begin{enumerate}[(1)]
    \item the Riemannian exponential map 
    \cite{EdelmanAriasSmith1999},
    \item  the polar factor retraction \cite[Section 4]{AbsilMahonySepulchre2008},
    \item the QR retraction \cite[Section 4]{AbsilMahonySepulchre2008},
    \item the Cholesky QR-based retraction \cite{Sato_CholeskyRetraction2019},
    \item the Cayley retraction \cite{Wen:2013}.
    \item the quasi-geodesic retractions of \cite{Bendokat_quasiGeo2021}.
\end{enumerate}   

The Riemannian exponential map (1) is the reference when quantifying the order of a retraction. 
It depends on the chosen metric \cite{HueperMarkinaLeite2020}. Computing the inverse requires an iterative algorithm \cite{  MataigneStiefelLog:2025,Sutti_Stiefel:2024,StiefelLog_Zimmermann2017}.
The polar factor retraction (2) is of second order under the Euclidean metric. The QR-retraction (3) is of first order under any metric, and so is its Cholesky counterpart (4).
Computing the inverse maps for the retractions (2) and (3) is based on solving a Sylvester/Riccati/Lyapu\-nov-type matrix equation \cite{Kaneko:2013}.

For the Cholesky QR-based retraction, we are not aware of any published work on computing the inverse. However, as with (2) and (3), a matrix equation is expected to arise in this task. 

To the best of our knowledge, the Cayley retraction (5) and the quasi-geodesic retractions of (6) are the only ones on the list that feature a closed form inverse \cite{Zhu2020,Bendokat_quasiGeo2021}. These retractions are of order $1$ under the Euclidean metric, and it turns out that the Cayley retraction is second--order accurate under the canonical metric.

In this work, we introduce a new retraction on the Stiefel manifold that is based on an additional twist (quite literally) in the polar factor retraction. 
The main features of this retraction, which we call {\em polar-light retraction}, are: 
\begin{itemize}
\item It is second-order accurate under the Euclidean metric.
\item It has a closed-form inverse.
\item Evaluating its inverse incurs asymptotically the same computational cost as evaluating the retraction itself. Computing matrix functions and decompositions is necessary only for small $(p\times p)$ matrices; with large $(n\times p)$ matrices, only simple matrix–matrix multiplications arise.
\item The closed formulas are analytic and do not formally require matrix decompositions.\footnote{when stated as in the upcoming equations \eqref{eq:varphi_U_Onp2}, \eqref{eq:psi_U_Onp2}} This is an advantage when derivatives are needed.
\item Unlike with the Cayley retraction, with appropriate matrix decompositions as an upfront investment, the associated retraction curves can be parameterized for efficient multi‑query use. 
\end{itemize}
Our experiments show that the polar--light retraction is usually a better approximation to the Riemannian exponential map than the polar factor retraction. In special cases, it reproduces the Riemann exponential exactly. 


{\em Organization:} Stiefel manifold essentials and the classical polar factor retraction are recapped in \Cref{sec:background}. The  theoretical findings constitute \Cref{sec:main_theoretical_findings}. We will present several numerical experiments in \Cref{sec:experiments}, investigating the behavior of the polar--light retraction in comparison with the polar factor retraction, as well as with several of the previously mentioned alternatives. We conclude the paper in \Cref{sec:conclusions}.

%

\section{Background}\label{sec:background}
The orthogonal group and special orthogonal group are
\[
    O(n) = \{\mathbf{Q}\in{\mathbb R}^{n\times n}\mid \mathbf{Q}^T\mathbf{Q}=I_{n}\},
    \text{ and }
    SO(n) = \{\mathbf{Q}\in O(n) \mid \det(\mathbf{Q})=1\}.
\]
The Stiefel manifold is their rectangular relative
\[
	\St(n,p) = \{U\in{\mathbb R}^{n\times p}\mid U^TU=I_{p}\}.
\]
Here and throughout, we write $I_p$ for the $(p\times p)$-identity matrix. We single out the special Stiefel point
$$
 E:= \begin{bmatrix}
	I_p\\
	0
\end{bmatrix}\in \St(n,p).
$$
The dimension of the Stiefel manifold is $\frac{p}2(p-1)+(n-p)p$, which reflects the number of independent parameters in a skew-symmetric ($p\times p$)-matrix and a rectangular $((n-p)\times p)$-matrix.
Fix $\hat U\in \St(n,p)$ with orthogonal completion $\hat U_{\bot} \in \St(n,n-p)$ so that $\widehat{\mathbf{Q}}=\begin{bmatrix}
  \hat U & \hat U_{\bot} 
\end{bmatrix}\in O(n)$. The tangent space at $\hat U$ is 
$$
    T_{\hat U}\St(n,p)=\{\xi = \widehat{\mathbf{Q}}\left[\begin{smallmatrix}
    A\\
    B
\end{smallmatrix}\right]\mid A\in \mathrm{skew}(p), B\in{\mathbb R}^{(n-p)\times p)} \}
=\widehat{\mathbf{Q}}T_E\St(n,p).
$$
For details, see \cite{AbsilMahonySepulchre2008,EdelmanAriasSmith1999}.
We denote the set of symmetric positive definite ($p\times p$)-matrices by 
$\SPD(p)$.
\subsection{The classical Stiefel polar factor retraction}
With the help of the polar decomposition, any rectangular real matrix $V\in\R^{n\times p}$ with full rank can be expressed as the unique product of the symmetric positive definite matrix square root $S=\sqrt{V^TV}$
and the column-orthogonal --hence Stiefel-- matrix $U=VS^{-1}$,
\[
    V = US, \quad U\in \St(n,p), \quad S\in \SPD(p),
\]
\cite[Theorem 8.1]{Higham:2008:FM}.
This gives rise to the {\em polar factor retraction}:
\begin{equation}
\label{eq:classic_PFR}
	R_{\hat U}^{\textsf{PF}}: T_{\hat U}\St(n,p)\to \St(n,p), \xi= \widehat{\mathbf{Q}}\left[\begin{smallmatrix}
    A\\
    B
\end{smallmatrix}\right] \mapsto
	(\hat U+\xi)\left(I_p + A^TA+B^TB\right)^{-\frac12},
\end{equation}
see \cite[eq. (4.7)]{AbsilMahonySepulchre2008}.
Note that 
$
    \hat U + \xi = \widehat{\mathbf{Q}}(E+ \left[\begin{smallmatrix}
    A\\
    B
\end{smallmatrix}\right])
$
so that 
\[
    (\hat U + \xi)^T(\hat U + \xi) = I_p + A^TA+B^TB = I_p+\xi^T\xi.
\]
Hence, $R_{\hat U}^{\textsf{PF}}$ sends the (non-orthogonal) $(n\times p)$-matrix $\hat U + \xi$ to the Stiefel manifold by mapping it to the orthogonal factor of its full polar decomposition. For the sake of argument, we call it the {\em full polar factor retraction}.
Inversion requires solving a Lyapunov equation \cite{Kaneko:2013}.
Higher-order extensions are discussed in \cite{Gawlik:2018}. 


\section{The polar-light retraction}\label{sec:main_theoretical_findings}
Any Stiefel matrix can be split into sub-blocks
$U= \begin{bmatrix}
	U_1\\
	U_2
\end{bmatrix} 
$
with $U_1\in {\mathbb R}^{p\times p}$ and $U_2\in{\mathbb R}^{(n-p)\times p}$.
If invertible, a local coordinate representation can be obtained from a polar decomposition of the small block $U_1$ alone; hence the term `{\em light}'.
\begin{lemma}
\label{lem:polar-light-chart}
Consider the special point  $ E= \begin{bmatrix}
	I_p\\
	0
\end{bmatrix} \in \St(n,p)$.
The map 
\begin{eqnarray*}
\psi_E: \St(n,p)\supset \mathcal{B} &\to& T_{E}\St(n,p)\cong\Skew(p)\times {\mathbb R}^{(n-p)\times p}\cong{\mathbb R}^{\frac{p}2(p-1)+(n-p)p}
\\
U=
\begin{bmatrix}
	U_1\\
	U_2
\end{bmatrix}&\mapsto& 
\begin{bmatrix}
	\log_m\left(U_1(U_1^TU_1)^{-\frac12}\right)\\
	U_2(U_1^TU_1)^{-\frac12}
\end{bmatrix}
=:
\begin{bmatrix}
	A\\
	B
\end{bmatrix}
\end{eqnarray*}
is a coordinate chart on a (relative) open, path-connected neighborhood $\mathcal{B}\subset \St(n,p)$ around $E$.
\end{lemma}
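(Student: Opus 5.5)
We prove the lemma by writing down the inverse of $\psi_E$ explicitly and checking that both maps are smooth on suitable open sets. Take $\mathcal{B}$ to be the set of $U\in\St(n,p)$ for which $U_1$ is invertible and the orthogonal polar factor $Q_1:=U_1(U_1^TU_1)^{-\frac12}$ lies in the domain of the principal matrix logarithm on $SO(p)$. Concretely, we require that $Q_1$ has no eigenvalue $-1$, or, more conservatively, that $\|Q_1-I_p\|<1$. This set is relatively open because it is cut out by open conditions on continuous functions of $U$. It contains $E$, since then $U_1=I_p$ and $Q_1=I_p$. To make $\mathcal{B}$ path-connected, we shrink it to a star-shaped set: we define it as the image of a ball under the inverse map, or equivalently as the set where $\|A\|$ is below $\pi$ in spectral norm.

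On $\mathcal{B}$ the map is well-defined and smooth, for three reasons. First, $U_1^TU_1\in\SPD(p)$, and the inverse square root is analytic on $\SPD(p)$. Second, $Q_1$ lies in $SO(p)$ near $I_p$, and the principal $\log_m$ is analytic there with values in $\Skew(p)$. Third, $B=U_2(U_1^TU_1)^{-\frac12}$ is a product of smooth maps. Hence $\psi_E(U)\in\Skew(p)\times\R^{(n-p)\times p}$, which is $T_E\St(n,p)$ by the tangent space description in \Cref{sec:background}.

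The key step is the inverse. Given $(A,B)$, we want $U$ with $Q_1=\exp_m(A)$ and $B=U_2S^{-1}$, where $S:=(U_1^TU_1)^{\frac12}$. Then $U_1=Q_1S$ and $U_2=BS$. The Stiefel constraint $U^TU=I_p$ becomes
\[
S(I_p+B^TB)S=I_p,
\]
so $S=(I_p+B^TB)^{-\frac12}$, which is symmetric positive definite. This gives the candidate
\[
\varphi_E(A,B)=\begin{bmatrix}\exp_m(A)\\ B\end{bmatrix}(I_p+B^TB)^{-\frac12}.
\]
This map is smooth, indeed analytic, on all of $\Skew(p)\times\R^{(n-p)\times p}$ and lands in $\St(n,p)$. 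For $\psi_E\circ\varphi_E=\id$, observe that $U_1=\exp_m(A)S$ is a polar decomposition, and the polar decomposition is unique. Hence the recovered polar factor is $\exp_m(A)$ and the recovered $S$ is $(I_p+B^TB)^{-\frac12}$. Therefore $B$ is returned, and $\log_m(\exp_m(A))=A$ provided the eigenvalues of $A$ have modulus less than $\pi$. For $\varphi_E\circ\psi_E=\id$ on $\mathcal{B}$ we run the same computation in reverse.

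Hence $\psi_E$ is a diffeomorphism from $\mathcal{B}:=\varphi_E(\mathcal{D})$ onto $\mathcal{D}$, where $\mathcal{D}=\{(A,B):\|A\|_2<\pi\}$. The set $\mathcal{D}$ is convex, so its continuous image $\mathcal{B}$ is path-connected. Openness of $\mathcal{B}$ follows because it equals the set of $U$ with $U_1$ invertible and polar factor having spectrum away from $-1$. The main subtlety is this choice of domain. One has to ensure that the branch of $\log_m$ is principal and skew-valued, and that the domain where $\log_m$ inverts $\exp_m$ is exactly the set where $\mathrm{spec}(A)\subset i(-\pi,\pi)$. Everything else follows from the uniqueness of the polar decomposition.
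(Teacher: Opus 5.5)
Your proposal is correct and takes the paper's route: the paper also proves the lemma by exhibiting the explicit inverse $\varphi_E(A,B)=\begin{bmatrix}\exp_m(A)\\ B\end{bmatrix}(I_p+B^TB)^{-\frac12}$ and checking $\psi_E\circ\varphi_E=\mathrm{id}$ and $\varphi_E\circ\psi_E=\mathrm{id}_{\mathcal{B}}$ via $U_1^TU_1+U_2^TU_2=I_p$. Your version adds explicit domain details that the paper leaves implicit: the principal logarithm, $\mathcal{B}=\varphi_E(\{\|A\|_2<\pi\})$, and the observation that this domain equals the set where the polar factor has no eigenvalue $-1$, which in turn forces that factor into $SO(p)$.
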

Below, we state the inverse map $\psi_E^{-1}$, which implicitly proves the lemma.

The chart $\psi_E$ can be interpreted as follows:
A polar decomposition of the upper $(p\times p)$-block of $U$ yields a splitting 
\[
    U_1 = \underbrace{U_1(U_1^TU_1)^{-\frac12}}_{=:R\in O(p)}\underbrace{(U_1^TU_1)^{\frac12}}_{=:S\in\SPD(p)}.
\]
Because there is a continuous path from $E$ to $U$, and thus from the upper block $I_p$ to the upper block $U_1$, it holds
$\det(R)=\det(U_1(U_1^TU_1)^{-\frac12})=+1$, so that $R\in SO(p)$.
The following restrictions of the matrix exponential
\[
 \exp_m|_{\sym(p)}: \sym(p)\rightarrow  \SPD(p), \quad \exp_m|_{\Skew(p)}: \Skew(p)\rightarrow  SO(p)
\]
constitute a global diffeomorphism \cite[Thm. 2.8]{Pennec2006} and a globally surjective local diffeomorphism 
\cite[\S. 3.11, Thm. 9]{godement2017introduction}, respectively.
Hence, there is $A\in\Skew(p)$ and $X\in \sym(p)$ such that $R=\exp_m(A)$ and $S = \exp_m(X)$ and
\[
        U_1 =RS= \exp_m(A)\exp_m(X).\footnote{This splitting represents the $p^2$ degrees of freedom of a full-rank $(p\times p)$-matrix divided into $\frac12p(p-1)$ parameters for the skew-symmetric part and $\frac12p(p+1)$ parameters for the symmetric part in multiplicative form.}
\] 
Therefore, $\psi_E$ maps a Stiefel point
$\begin{bmatrix}
	U_1\\
	U_2
\end{bmatrix}$
from a suitable neighborhood of $E$ to the coordinate matrices
$$ A = \log_m(R), \quad B = U_2\exp_m(X)^{-1}.$$

\begin{lemma}
\label{lem:polar-light-ret}
The map $\psi_E$ is invertible; the inverse map $\varphi_E=\psi_E^{-1}$ is a local paramaterization and is given by
\begin{eqnarray}
\nonumber
	\varphi_E: \Skew(p)\times {\mathbb R}^{(n-p)\times p}
	&\to& \St(n,p),
	\\
    \label{eq:novel_PFR}
	\begin{bmatrix}
		A\\
		B
	\end{bmatrix}&\mapsto& 
	\begin{bmatrix}
		\exp_m(A)\\
		B
	\end{bmatrix}(I_{p} + B^TB)^{-\frac12}.
\end{eqnarray}
\end{lemma}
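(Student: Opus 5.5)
We verify directly that $\varphi_E$ maps into $\St(n,p)$, that $\psi_E\circ\varphi_E=\id$ on a suitable coordinate domain, and that $\varphi_E\circ\psi_E=\id$ on $\mathcal{B}$. Smoothness of both maps then gives the chart property claimed in \Cref{lem:polar-light-chart}.

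\textbf{Step 1: $\varphi_E$ lands in $\St(n,p)$.} Set $S_B:=(I_p+B^TB)^{\frac12}\in\SPD(p)$. Since $\exp_m(A)\in SO(p)$ for $A\in\Skew(p)$,
\[
\begin{bmatrix}\exp_m(A)\\ B\end{bmatrix}^T\begin{bmatrix}\exp_m(A)\\ B\end{bmatrix}=I_p+B^TB=S_B^2 .
\]
Conjugating by $S_B^{-1}$ gives $I_p$, so $\varphi_E(A,B)^T\varphi_E(A,B)=I_p$. The map is smooth, because $B\mapsto(I_p+B^TB)^{-\frac12}$ is analytic on the open set where $I_p+B^TB$ is SPD, which here is everywhere.

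\textbf{Step 2: $\psi_E\circ\varphi_E=\id$.} For $U=\varphi_E(A,B)$ we have $U_1=\exp_m(A)S_B^{-1}$ and $U_2=BS_B^{-1}$. Then $U_1^TU_1=S_B^{-1}S_B^{-1}=S_B^{-2}$, so $(U_1^TU_1)^{-\frac12}=S_B$, using uniqueness of the SPD square root. Consequently $U_1(U_1^TU_1)^{-\frac12}=\exp_m(A)$ and $U_2(U_1^TU_1)^{-\frac12}=B$. It remains that $\log_m(\exp_m(A))=A$. This is where the domain restriction enters: it holds when the eigenvalues of $A$, which lie in $i\R$, have modulus $<\pi$, so that the principal logarithm inverts the exponential. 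Accordingly, we take the coordinate domain $\mathcal{D}=\{(A,B): \|A\|_2<\pi\}$. This domain is open and star-shaped about $0$, hence path-connected, and we set $\mathcal{B}:=\varphi_E(\mathcal{D})$.

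\textbf{Step 3: $\varphi_E\circ\psi_E=\id$ on $\mathcal{B}$.} Take $U\in\mathcal{B}$ and write $U_1=RS$ with $R\in SO(p)$ and $S=(U_1^TU_1)^{\frac12}$. By the definition of $\mathcal{B}$, $R=\exp_m(A)$ with $\|A\|_2<\pi$, so $\log_m(R)=A$. Put $B=U_2S^{-1}$. From $U^TU=I_p$ we get $S^2+U_2^TU_2=I_p$, hence $I_p+B^TB=S^{-1}(S^2+U_2^TU_2)S^{-1}=S^{-2}$. Therefore $(I_p+B^TB)^{-\frac12}=S$, and
\[
\varphi_E(\psi_E(U))=\begin{bmatrix}RS\\ U_2\end{bmatrix}=U .
\]
The same computation shows that $\mathcal{B}$ equals the set of $U$ for which $U_1$ is invertible and the polar factor of $U_1$ has no eigenvalue $-1$. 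This set is open in $\St(n,p)$ and contains $E$, and it is path-connected as the image of $\mathcal{D}$ under the continuous map $\varphi_E$.

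\textbf{Main obstacle.} The delicate point is the domain bookkeeping for the logarithm. The skew-symmetric branch of $\log_m$ is only well-defined and smooth on rotations without eigenvalue $-1$, and $\exp_m|_{\Skew(p)}$ is merely a local diffeomorphism. We therefore lean on the principal-logarithm restriction $\|A\|_2<\pi$ rather than on the global surjectivity cited above. Once that is fixed, $\psi_E$ and $\varphi_E$ are mutually inverse smooth maps between open sets, so $\varphi_E$ is a local parametrization and $\psi_E$ a chart.
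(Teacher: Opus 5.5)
Your proof is correct and follows the same route as the paper, whose proof only asserts that $\psi_E\circ\varphi_E=\id$ and $\varphi_E\circ\psi_E=\id_{\mathcal{B}}$ can be checked directly from $U_1^TU_1+U_2^TU_2=I_p$; that is exactly the computation you carry out in Steps 2 and 3. Your restriction to $\|A\|_2<\pi$ (so that $\log_m\circ\exp_m=\id$ on skew-symmetric arguments) and your resulting description of $\mathcal{B}$ supply domain bookkeeping the paper leaves implicit, and the paper's identity $\psi_E\circ\varphi_E=\id$, stated there on all of $\Skew(p)\times\R^{(n-p)\times p}$, in fact only holds after such a restriction.
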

\begin{proof}
    Recalling that the subblocks of a Stiefel matrix $U$ are related by $I_p= U_1^TU_1+ U_2^T U_2$, it is straightforward to check the identities
\[
    \psi_E\circ \varphi_E = \mathrm{id}_{\text{skew}(p)\times {\mathbb R}^{(n-p)\times p}},\quad 
    \varphi_E\circ \psi_E = \mathrm{id}_{\mathcal{B}}.
\]
\end{proof}
The general pattern underlying $\varphi_E$ is revealed by writing
\[
\varphi_E(\xi) =  
	\left(E \exp_m(E^T\xi) + (I_p-EE^T)\xi\right)
    (I_{p} + \xi^T (I-EE^T)\xi)^{-\frac12},
    \quad \xi = \begin{bmatrix}
		A\\
		B
	\end{bmatrix}.
\]
Since $(I_{p} + B^TB)$ is symmetric positive definite, the inverse and the square root of the inverse are uniquely defined. By construction, the coordinate center is
\[
    \varphi_E\left(\begin{bmatrix}
		0\\
		0
	\end{bmatrix} \right) = E.
\]
\subsection{Changing the coordinate center}
Suppose that $\hat U\in \St(n,p)$ is a point that we designate as the center of the coordinate chart.
Let $\widehat{\mathbf{Q}}= [\hat U \ \hat U_\perp]\in\mathrm{O}(n)$
be an arbitrary but fixed orthogonal completion.
Define
\begin{align}
\label{eq:varphi_U}
    \varphi_{\hat U}:~& T_{\hat U}\St(n,p) \to \St(n,p),
    & \xi\mapsto& \varphi_{\hat U} (\xi) :=\widehat{\mathbf{Q}}\varphi_{E}\left(\widehat{\mathbf{Q}}^T\xi\right),\\
\label{eq:psi_U}
    \psi_{\hat U}:~& \mathcal{B}_{\hat U}\to T_{\hat U}\St(n,p),
    & U\mapsto& \psi_{\hat U} (U) :=\widehat{\mathbf{Q}}\psi_{E}\left(\widehat{\mathbf{Q}}^TU\right).
\end{align}
Formally, these maps constitute a parameterization and a corresponding chart  with coordinate center $\hat U=\varphi_{\hat U}(0)$, $0=\psi_{\hat U}(\hat U)$.

In practical computations, where $n\gg p$, it is infeasible to actually form the completion $\hat U_\bot$.
Fortunately, by standard Stiefel techniques we do not have to.
Consider a tangent vector 
$$
\xi = \widehat{\mathbf{Q}}\left[\begin{smallmatrix}
    A\\
    B
\end{smallmatrix}\right] = \hat U A + \hat U_\bot B = 
\hat U(\hat U^T\xi) + (I-\hat U\hat U^T)\xi.
$$
A calculation validates the following $\mathcal{O}(np^2)$ matrix formulae:
\begin{align}
    \varphi_{\hat U}(\xi) &= \left(\hat U\exp_m(\hat U^T\xi) + (I_p-\hat U\hat U^T)\xi\right)\left(I_p+\xi^T(I-\hat U\hat U^T)\xi\right)^{-\frac12},\label{eq:varphi_U_Onp2}\\
    \label{eq:psi_U_Onp2}
    \psi_{\hat U}(U) &= \hat U\log_m\left(\hat U^TU (U^T\hat U \hat U^TU)^{-\frac12}\right) + (I_p-\hat U\hat U^T)U(U^T\hat U \hat U^TU)^{-\frac12}.
\end{align}
Since $A=\hat U^T\xi\in \Skew(p)$, computing the matrix exponential is efficient and stable.\footnote{Nevertheless, in practice, we will omit the matrix exponential by resorting to its Cayley approximation, which is second-order accurate.}
Rearranging terms, we get
\begin{equation}
\label{eq:varphi_U_Onp2_efficient}
  \varphi_{\hat U}(\xi) = 
  \left(\hat U(\exp_m(A) - A) + \xi\right)
  \left(I_p+\xi^T\xi + A^2\right)^{-\frac12}.
\end{equation}
With the SVD $MSR^T = \hat U^TU$, the argument of the matrix logarithm that appears in $\psi_{\hat U}(U)$ is the orthogonal matrix $\hat U^TU (U^T\hat U \hat U^TU)^{-\frac12}=MR^T$. The expression reduces to
\begin{equation}
\label{eq:psi_U_Onp2_efficient}
  \psi_{\hat U}(U) = \hat U(\log_m(MR^T)-MR^T) +  URS^{-1}R^T.
\end{equation}
It is interesting to observe that $MR^T\in O(p)$ is the solution to the Procrustes problem 
$\min_{Q\in O(p)}\|U - \hat UQ\|_F$, i.e., it is the rotation that brings $U$ closest to $\hat U$, see \cite[Section 6.4.1]{GolubVanLoan4th}.
\subsection{Retraction order}\label{sec:order}
Retractions are approximations of the Riemannian exponential map. The Riemannian exponential depends on the chosen metric.
Given a base point $\hat U\in \St(n,p)$ and an arbitrary but fixed orthogonal completion $\widehat{\mathbf{Q}}= [\hat U \ \hat U_\perp]\in O(n)$, the Riemannian exponential on the Stiefel manifold under the one-paramater family of metrics of \cite{HueperMarkinaLeite2020} reads
\begin{eqnarray}
\nonumber
    \Exp_{\hat U}:T_{\hat U}\St(n,p)&\to& \St(n,p), \\
\label{eq:St_exp}
    \xi = \widehat{\mathbf{Q}}\left[\begin{smallmatrix}
    A\\
    B
\end{smallmatrix}\right]
&\mapsto &
\widehat{\mathbf{Q}}\exp_\mathrm{m}\left(\begin{bmatrix}
        2\beta A&-B^\top\\
        B&0
    \end{bmatrix}\right)   
    \begin{bmatrix}
   I_p\\
    0
  \end{bmatrix}\exp_{\mathrm{m}}((1-2\beta)A),
\end{eqnarray}
see ~\cite[eq. (11)]{ZimmermannHueper2022}.
The canonical and the Euclidean metric correspond to $\beta=\frac12$ and $\beta = 1$, respectively.
By definition, a retraction of order $k$ coincides with the Taylor expansion of $\xi \mapsto \Exp_{\hat U}(\xi)$ around $0\in T_{\hat U}\St(n,p)$ up to terms of $k$'th order.
It holds that 
\begin{equation}\label{eq:Exp_Taylor}
    \xi \mapsto  \Exp_{\hat U}(0+\xi) = \Exp_{\hat U}(0) + D(\Exp_{\hat U})_0[\xi] + \frac12 D^2(\Exp_{\hat U})_0[\xi,\xi] + \mathcal{O}(\|\xi\|^3),
\end{equation}
where
\begin{align}\label{eq:Exp_derivatives}
    \Exp_{\hat U}(0) =
    \widehat{\mathbf{Q}}\left[\begin{smallmatrix}
    I\\
    0
\end{smallmatrix}\right]=U, \hspace{0.1cm}
    D(\Exp_{\hat U})_0[\xi]=\widehat{\mathbf{Q}}\left[\begin{smallmatrix}
    A\\
    B
\end{smallmatrix}\right] = \xi, \hspace{0.1cm}
 D^2(\Exp_{\hat U})_0[\xi,\xi]=
 \widehat{\mathbf{Q}}\left[\begin{smallmatrix}
    A^2-B^TB\\
    (2-2\beta)BA
\end{smallmatrix}\right]. 
\end{align}
The middle equation means $D(\Exp_{\hat U})_0=\mathrm{id}\big\vert_{T_U\St(n,p)}$, which is a classical fact from Riemannian geometry.
Since this is independent of the metric parameter, any retraction is a (first-order) retraction for the whole metric family.
It is well known that the classical polar factor retraction from \eqref{eq:classic_PFR} is of second-order under the Euclidean metric.\footnote{The Taylor expansion is
$
R_{\hat U}^{\textsf{PF}} (t\xi)
= \hat U
+ t\,\xi
+ \frac12 t^2\, \hat U (A^2 - B^T B)
+ O(t^3).
$}
The maps $\varphi_{\hat U}$ from \eqref{eq:varphi_U} share this property.
\begin{lemma}\label{lem:second_order_ret}
    The family of maps $\{\varphi_{\hat U}\mid \hat U\in \St(n,p)\}$ from \eqref{eq:novel_PFR},\eqref{eq:varphi_U} is a retraction under any metric of the one-parameter family.
    Under the Euclidean metric, it is a retraction of second order.
\end{lemma}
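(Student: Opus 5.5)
By the equivariance built into \eqref{eq:varphi_U}, $\varphi_{\hat U}(\xi)=\widehat{\mathbf{Q}}\varphi_E(\widehat{\mathbf{Q}}^T\xi)$. The Riemannian exponential \eqref{eq:St_exp} and its Taylor data \eqref{eq:Exp_derivatives} transform with the same factor $\widehat{\mathbf{Q}}$. So it suffices to work at the center $E$ with $\xi=\left[\begin{smallmatrix}A\\ B\end{smallmatrix}\right]$, $A\in\Skew(p)$. The plan is to Taylor expand $t\mapsto\varphi_E(t\xi)$ to second order and compare with \eqref{eq:Exp_Taylor}.

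First I check the retraction axioms. Smoothness of $\varphi_{\hat U}$ is clear, since $\exp_m$ is analytic and $X\mapsto X^{-1/2}$ is analytic on $\SPD(p)$, where $I_p+B^TB$ always lies. The value $\varphi_{\hat U}(0)=\hat U$ was noted after \cref{lem:polar-light-ret}. Smooth dependence on the base point $\hat U$ follows from the completion-free formula \eqref{eq:varphi_U_Onp2}, which does not involve the arbitrary choice of $\hat U_\perp$. That formula also shows the definition is independent of this choice.

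Next comes the expansion. We have $\exp_m(tA)=I_p+tA+\tfrac{t^2}{2}A^2+\mathcal{O}(t^3)$ and $(I_p+t^2B^TB)^{-1/2}=I_p-\tfrac{t^2}{2}B^TB+\mathcal{O}(t^4)$. Multiplying gives
\[
\varphi_E(t\xi)=\begin{bmatrix} I_p+tA+\tfrac{t^2}{2}(A^2-B^TB)\\ tB\end{bmatrix}+\mathcal{O}(t^3).
\]
Reading off coefficients, $D(\varphi_E)_0[\xi]=\xi$. This matches the middle identity in \eqref{eq:Exp_derivatives}, which is metric independent, so $\varphi$ is a first-order retraction for every $\beta$.

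The second-order term is $D^2(\varphi_E)_0[\xi,\xi]=\left[\begin{smallmatrix}A^2-B^TB\\ 0\end{smallmatrix}\right]$. Comparing with \eqref{eq:Exp_derivatives}, the upper blocks agree for all $\beta$, and the lower block $(2-2\beta)BA$ vanishes exactly when $\beta=1$, i.e.\ under the Euclidean metric. Since $D^2$ at $0$ is a symmetric bilinear form, agreement on the diagonal $[\xi,\xi]$ determines it completely by polarization. Hence the second-order Taylor polynomials coincide, which proves second order under the Euclidean metric.

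The only delicate point is the definition of ``second order'' used in \cref{sec:order}: coefficientwise agreement of the curves $t\mapsto\varphi_{\hat U}(t\xi)$ and $t\mapsto\Exp_{\hat U}(t\xi)$ up to $t^2$. With this definition, the argument above is complete. Under the alternative definition via vanishing normal acceleration, one would additionally check that $\tfrac{d^2}{dt^2}\varphi_E(t\xi)|_{0}=\left[\begin{smallmatrix}A^2-B^TB\\0\end{smallmatrix}\right]=E(A^2-B^TB)$ lies in the Euclidean normal space $\{EW: W\in\sym(p)\}$ at $E$. It does, because $A^2-B^TB$ is symmetric, so the conclusion holds in that sense too.
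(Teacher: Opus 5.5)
Your proof is correct and follows the same route as the paper. Both arguments expand $t\mapsto\varphi_E(t\xi)$ using the series for $\exp_m$ and $(I_p+t^2B^TB)^{-1/2}$, read off $D(\varphi_{\hat U})_0[\xi]=\xi$ and $D^2(\varphi_{\hat U})_0[\xi,\xi]=\widehat{\mathbf{Q}}\left[\begin{smallmatrix}A^2-B^TB\\ 0\end{smallmatrix}\right]$, compare these against the exponential's lower block $(2-2\beta)BA$, and get smooth dependence on $\hat U$ from the completion-free formula \eqref{eq:varphi_U_Onp2}. Your remarks on polarization and on the normal-acceleration characterization (via $A^2-B^TB\in\sym(p)$) are correct additions that the paper does not spell out.
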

\begin{proof}
Using the series expressions for the matrix exponential, matrix inversion, and the matrix square root, a series expansion of $t\mapsto \varphi_{\hat U}(t\xi)$ at $t=0$ in the direction $\xi = \widehat{\mathbf{Q}}\left[\begin{smallmatrix}
    A\\
    B
\end{smallmatrix}\right]\in T_{\hat U}\St(n,p)$ is seen to be
\[
\varphi_{\hat U}(t\xi)
=
\widehat{\mathbf{Q}} \varphi_E\left(0 + t\left[\begin{smallmatrix}
    A\\
    B
\end{smallmatrix}\right]\right)
=\widehat{\mathbf{Q}} 
\begin{bmatrix}
I + tA + \frac12 t^2(A^2-B^T B) + \mathcal{O}(t^3)\\
tB - \frac12 t^3 BB^TB + \mathcal{O}(t^5)
\end{bmatrix}, \ t\to 0.
\]
Hence, $D(\varphi_{\hat U})_0[\xi]=\widehat{\mathbf{Q}}\left[\begin{smallmatrix}
    A\\
    B
\end{smallmatrix}\right]$
and
$
D^2(\varphi_{\hat U})_0[\xi,\xi]=
 \widehat{\mathbf{Q}}\left[\begin{smallmatrix}
    A^2-B^TB\\
    0
\end{smallmatrix}\right]
$
so that 
$\varphi_{\hat U}$ matches the Riemann exponential $\Exp_{\hat U}$ under any $\beta$-metric  up to terms of order one, and up to terms of order two under the Euclidean metric ($\beta=1$).\\
Equation \eqref{eq:varphi_U_Onp2} reveals smooth, even analytic dependence on the base point $\hat U\in \St(n,p)$.
\end{proof}
For ease of notation, we will from now on denote the polar--light retraction \eqref{eq:varphi_U} by $R^{\textsf{PL}}$.
\paragraph{Two special cases}
Stiefel tangent vectors come in the form
$
\xi =  \hat U A + \hat U_\bot B = 
(\hat U\hat U^T)\xi + (I-\hat U\hat U^T)\xi
$ with a (skew-symmetric) component in $\text{span}(\hat U)$ and a component in  $\text{span}(\hat U)^\bot$.
In this sense, we write
\[
    T_{\hat U} \St(n,p) =
    T^{\parallel}_{\hat U} \St(n,p)\oplus T^{\bot}_{\hat U} \St(n,p). 
\]
\begin{lemma}
\label{lem:light_between_PF_RL}
When both maps are restricted to $T^{\parallel}_{\hat U} \St(n,p)$, the polar-light retraction $R^{\textsf{PL}}$ coincides with the Riemannian exponential
\[
    R^{\textnormal{\textsf{PL}}}_{\hat U}\big\vert_{T^{\parallel}_{\hat U} \St(n,p)}
    = \Exp_{\hat U}\big\vert_{T^{\parallel}_{\hat U} \St(n,p)}, \quad
    (R^{\textnormal{\textsf{PL}}}_{\hat U})^{-1}\big\vert_{ R^{\textnormal{\textsf{PL}}}_{\hat U}\left(T^{\parallel}_{\hat U} \St(n,p)\right)}
    = \Log_{\hat U}\big\vert_{\Exp_{\hat U}\left(T^{\parallel}_{\hat U} \St(n,p)\right)}.
\]
When restricted to $T^{\bot}_{\hat U} \St(n,p)$, $R^{\textnormal{\textsf{PL}}}$ coincides with the full polar factor retraction \eqref{eq:classic_PFR}
\[
    R^{\textnormal{\textsf{PL}}}_{\hat U}\big\vert_{T^{\bot}_{\hat U} \St(n,p)}
    = R^{\textnormal{\textsf{PF}}}_{\hat U}\big\vert_{T^{\bot}_{\hat U} \St(n,p)},\quad
    R^{\textnormal{\textsf{PL}}}_{\hat U}\big\vert_{ R^{\textnormal{\textsf{PL}}}_{\hat U}\left(T^{\bot}_{\hat U} \St(n,p)\right)}
    = (R^{\textnormal{\textsf{PF}}}_{\hat U} )^{-1}\big\vert_{R^{\textnormal{\textsf{PF}}}_{\hat U}\left(T^{\bot}_{\hat U} \St(n,p)\right)}.
\]
\end{lemma}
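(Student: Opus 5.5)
Throughout, we work in the coordinates supplied by the completion $\widehat{\mathbf{Q}}$. By \eqref{eq:varphi_U}, \eqref{eq:psi_U} and \eqref{eq:St_exp}, all three maps $R^{\textsf{PL}}_{\hat U}$, $\Exp_{\hat U}$ and $R^{\textsf{PF}}_{\hat U}$ are equivariant under left multiplication by $\widehat{\mathbf{Q}}$, and so are their inverses. It therefore suffices to prove both statements at the special point $E$, with $\xi=\left[\begin{smallmatrix}A\\B\end{smallmatrix}\right]$. The subspaces $T^{\parallel}_E\St(n,p)$ and $T^{\bot}_E\St(n,p)$ are then simply the tangent vectors with $B=0$ and with $A=0$, respectively.

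\emph{Parallel case ($B=0$).} By \eqref{eq:novel_PFR}, $\varphi_E\left(\left[\begin{smallmatrix}A\\0\end{smallmatrix}\right]\right)=\left[\begin{smallmatrix}\exp_m(A)\\0\end{smallmatrix}\right]$, since $(I_p+0)^{-1/2}=I_p$. For the Riemannian exponential \eqref{eq:St_exp} with $B=0$, the block matrix is block diagonal, so its exponential is $\operatorname{diag}(\exp_m(2\beta A),I)$. Applying it to $E$ and multiplying by $\exp_m((1-2\beta)A)$ gives $\left[\begin{smallmatrix}\exp_m(2\beta A)\exp_m((1-2\beta)A)\\0\end{smallmatrix}\right]=\left[\begin{smallmatrix}\exp_m(A)\\0\end{smallmatrix}\right]$, because the two exponentials commute. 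This holds for every $\beta$. So the two maps agree on $T^{\parallel}$, and hence also have the same image.

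For the inverses: on that common image the chart $\psi_E$ returns $\left[\begin{smallmatrix}\log_m(\exp_m(A))\\0\end{smallmatrix}\right]$. The Riemannian logarithm, taken as the local inverse of $\Exp_E$ on the same neighborhood, returns the same preimage. The only subtlety is the branch of $\log_m$. I would handle it by restricting to the domain $\mathcal{B}$ of \Cref{lem:polar-light-chart}, on which both the principal logarithm and $\Log$ invert their exponentials, for instance $\|A\|<\pi$. This domain bookkeeping is the main point needing care. Since the two forward maps coincide there and both are injective, their inverses coincide on the image.

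\emph{Orthogonal case ($A=0$).} Here $\exp_m(0)=I_p$, so $\varphi_E(\xi)=\left[\begin{smallmatrix}I_p\\B\end{smallmatrix}\right](I_p+B^TB)^{-1/2}$. On the other hand, \eqref{eq:classic_PFR} gives $R^{\textsf{PF}}_E(\xi)=(E+\xi)(I_p+A^TA+B^TB)^{-1/2}$, which is the same matrix when $A=0$. Equality of the inverses on the common image $R^{\textsf{PF}}_E(T^{\bot})$ follows from injectivity exactly as before. Concretely, for $U$ in that image, $U_1=(I+B^TB)^{-1/2}$ is SPD, so the polar factor of $U_1$ is $I_p$. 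Then \eqref{eq:psi_U_Onp2} yields $A=\log_m(I_p)=0$ and $B=U_2U_1^{-1}$, which is the unique $A=0$ preimage under $R^{\textsf{PF}}_E$. Finally, we transport both statements back to $\hat U$ by left multiplication with $\widehat{\mathbf{Q}}$.
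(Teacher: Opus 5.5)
Your proposal is correct and follows the paper's own argument, which simply compares \eqref{eq:classic_PFR}, \eqref{eq:varphi_U_Onp2}, \eqref{eq:psi_U_Onp2} and \eqref{eq:St_exp} in the cases $B=0$ and $A=0$ (noting that $\Exp_{\hat U}$ is $\beta$-independent there); reducing to $E$ via $\widehat{\mathbf{Q}}$-equivariance is only a cosmetic difference. You go slightly beyond the paper by spelling out the branch and injectivity bookkeeping for the inverse identities (for $\Log_{\hat U}$, read your ``$\|A\|<\pi$'' as a bound on the Riemannian norm, e.g.\ $\|A\|_F<\pi$ below the Euclidean injectivity radius), and you rightly read the last identity as a statement about $(R^{\textsf{PL}}_{\hat U})^{-1}$ and $(R^{\textsf{PF}}_{\hat U})^{-1}$, which corrects a typo in the statement.
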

\begin{proof}
This is readily seen by comparing the equations \eqref{eq:classic_PFR}, \eqref{eq:varphi_U_Onp2}, \eqref{eq:psi_U_Onp2} and equation \eqref{eq:St_exp} in the above special cases, where either $A=0$ or $B=0$.
Note that when restricted to either $T^{\parallel}_{\hat U} \St(n,p)$ or to
$T^{\bot}_{\hat U} \St(n,p)$,
the Riemannian exponential is indepedent of the $\beta$-metric parameter.
\end{proof}
By the above lemma, we expect that for tangent vectors $
\xi =  \hat U A + \hat U_\bot B
$ with dominant $A$-component, the polar-light retraction will be closer to the Riemannian exponential than the full polar factor retraction.
If the $B$-component dominates, we expect that the full polar and the polar-light retraction produce similar results.
This is confirmed by the numerical experiments in the next section.

\subsection{Computing the polar--light retraction and its inverse}\label{sec:eff_ret_comp} 
The Cayley transformations
provide structure-preserving second order approximations for the matrix exponential and logarithm
\[
\exp_m(A)\approx \Cay(\tfrac{1}{2}A)= (I-\tfrac12 A)^{-1}(I+\tfrac12A),
\hspace{0.15cm} \log_m(R)\approx \Cay^{-1}(R)= 2(R+I)^{-1}(R-I).
\]
Here, structure-preserving means that 
\[
    \Cay\left(\Skew(p)\right) \subset SO(p),\quad \Cay^{-1}\left(SO(p)\right)\subset \Skew(p).
\]
As with the matrix logarithm, $\Cay^{-1}$ is only well-defined for matrices that do not feature $-1$ as an eigenvalue.
As with the matrix exponential, $\Cay$ is well-defined for all skew-symmetric matrices: The eigenvalues of a skew-symmetric matrix are imaginary so that the matrix factor $(I-\frac12 A)^{-1}$ cannot be singular.

With all input data as introduced for \eqref{eq:varphi_U_Onp2_efficient}, \eqref{eq:psi_U_Onp2_efficient}, the maps
\begin{eqnarray}
\label{eq:varphi_U_Onp2_cay}
  R^{\textsf{PL Cay}}_{\hat U}(\xi) &=& 
  \left(\hat U(\Cay(\tfrac{1}{2}A) - A) + \xi\right)
  \left(I_p+\xi^T\xi + A^2\right)^{-\frac12},\\
\label{eq:psi_U_Onp2_cay}
  (R^{\textsf{PL Cay}})^{-1}_{\hat U}(U) &=& \hat U(\Cay^{-1}(MR^T)-MR^T) +  URS^{-1}R^T
\end{eqnarray}
form a pair of retraction/inverse retraction of second order under the Euclidean metric. 

Computing the real Schur form of the skew--symmetric matrix $A$,  $A=WTW^T$, we obtain the block--diagonal matrix $T$, featuring either $(2\times 2)$ blocks or $(1\times 1)$ zero blocks on the diagonal. Thus, we can efficiently evaluate the matrix exponential of matrices of this type, as well as compute $\left(I-\frac{1}{2}T\right)^{-1}$, which feature in the Cayley transformation $\Cay(\tfrac{1}{2}A)=W\Cay(\tfrac{1}{2}T)W^T$. 

Given a fixed $\xi\in T_{\hat U}\St(n,p)$, consider the case that we want to evaluate the polar--light retraction $R^{\textsf{PL}}_{\hat U}(t\xi$), for various $t$. For this to be efficient, we can compute a QR decomposition of $(I_p+UU^T)\xi=QR$, and obtain $I_p+t^2\xi^T(I_p+UU^T)\xi=I_p+t^2R^TR$, so that one only needs to form $R$. One proceeds by computing the SVD $R=\Gamma \Sigma V^T$, so that $I_p+t^2R^TR=V(I_p+t^2\Sigma^2)V^T$, for which the matrix square--root and --inverse can be efficiently computed by applying the operations on the diagonal elements of $(I_p+t^2\Sigma^2)$ only, as it holds that $(V(I_p+t^2\Sigma^2)V^T)^{-\frac12}=V(I_p+t^2\Sigma^2)^{-\frac12}V^T$. For the exponential term, compute the real Schur form $A=WTW^T$ and obtain $R^{\textsf{PL Cay}}_{\hat U}(t\xi)=((\hat UW)(\exp_m(tT)-tT)+t(\xi W))(W^TV)(I_p+t^2\Sigma^2)^{-\frac{1}{2}}V^T$. One can replace $\exp_m$ with $\Cay$ for a Cayley--accelerated variant. For both variants, one can exploit the block--structure of $tT$ when computing the matrix exponential $\exp_m(tT)$ \cite[pp. 43]{gallier2020differential}, or the inverse $\left(I-\frac{t}{2}T\right)^{-1}$.

\subsection{The classical Stiefel Cayley retraction}
\label{sec:cayley_comp}
For comparison purpose, we recall the classical Cayley retraction \cite{Wen:2013,Zhu2020} on the Stiefel manifold
\begin{equation}\label{eq:full_cayley}
    R^{\textsf{Cay}}_{\hat U}(t\xi)=\Cay\left(\tfrac{t}{2}(P_{\hat U}\xi \hat U^T-\hat U\xi^TP_{\hat U})\right)\hat U.
\end{equation}
with $P_{\hat U}=I-\frac{1}{2}\hat U \hat U^T$. While the Cayley transformation features in both $R^{\textsf{Cay}}$ and $R^{\textsf{PL Cay}}$, the two retractions are inherently different. 
The economy-size equivalent to \eqref{eq:full_cayley}
is
\begin{equation}\label{eq:Cayley_retraction_p}
    R^{\textsf{Cay}}_{\hat U}(t\xi)=-\hat U+(t(\xi-\hat UA)+2\hat U)\left(\tfrac{t^2}{4}B^TB-\tfrac{t}{2}A+I\right)^{-1},
\end{equation}
analogous to \cite[Proposition 5.2]{BendokatZimmermann:2021}, \cite[Proposition 4.5]{Tiep:2025}. This is the form that we use in the numerical experiments.
Note that computing \cref{eq:Cayley_retraction_p} only requires inversion of a ($p\times p$) matrix.

The Cayley retraction $R^{\textsf{Cay}}$ is second--order accurate under the canonical metric.
We expect this to be known, but we could not find a literature reference. 
The differential terms of first and second order are
\[
D(R^{\textsf{Cay}}_{\hat U})_0[\xi]=\widehat{\mathbf{Q}}\left[\begin{smallmatrix}
    A\\
    B
\end{smallmatrix}\right],
\text{ and }
D^2(R^{\textsf{Cay}}_{\hat U})_0[\xi,\xi]=
 \widehat{\mathbf{Q}}\left[\begin{smallmatrix}
    A^2-B^TB\\
    BA
\end{smallmatrix}\right]
\text{ with } \xi = \widehat{\mathbf{Q}}\left[\begin{smallmatrix}
    A\\
    B
\end{smallmatrix}\right].
\]
A comparison with the Taylor series \eqref{eq:Exp_Taylor},\eqref{eq:Exp_derivatives} of the Exponential map shows second-order consistency for $\beta = \frac{1}{2}$ (canonical metric) and first-order consistency for $\beta = 1$ (Euclidean metric).

The inverse to \eqref{eq:Cayley_retraction_p} is 
\begin{equation*}
    (R_{\hat U}^{\textsf{Cay}})^{-1}(V)=\xi=2\hat UF^T+2VF-2\hat U, \ \ F=(I_p+\hat U^TV)^{-1}, 
\end{equation*}
see \cite[Equation 54]{Zhu2020}.

%
\section{Experimental results}
\label{sec:experiments}
In this section we present five numerical experiments investigating the properties of the new polar-light retraction. We also compare it to other existing alternatives. The source code is publicly available.\footnote{\url{https://github.com/JensenRasmus/PolarLightStiefel}}
\subsection{Accuracy of the PL retraction relative to the (Eculidean) Riemann exponential}

First, we evaluate the accuracy of the polar-light retraction by quantifying its deviation from the corresponding Riemannian geodesic.
To this end, we compute a pseudo-random data triple
$U_0,U_1\in \St(n,p)$, $\xi\in T_{U_0}\St(n,p)$ such that 
\[
    \Exp_{U_0}(\xi) = U_1,\quad \dist_{E}(U_0,U_1)= \frac{\pi}{2},
\]
where $\dist_E$ denotes the Euclidean Stiefel metric associated with $\beta = 1$ in \eqref{eq:St_exp}.
The geodesic connecting $U_0,U_1$ is 
\[
    \gamma_E: [0,1]\to \St(n,p), t \mapsto \Exp_{U_0}(t\xi).
\]
To obtain a retraction $R_{U_0}$ connecting the same endpoints, we compute $R_{U_0}^{-1}(U_1)$. This yields a tangent vector $\xi_R$ with $R_{U_0}(\xi_R) = U_1$.
The retraction curve connecting the given endpoints is
\[
    \gamma_R: [0,1]\to \St(n,p), t \mapsto R_{U_0}(t\xi_R). 
\]
We discretize the unit interval $[0,1]$ in $51$ equidistant steps $t_k$ and compute the error between the geodesic and the retraction curve $\|\gamma_E(t_k) - \gamma_R(t_k)\|_F$. As retractions, we consider the polar factor retraction $R^{\textsf{PF}}$ \eqref{eq:classic_PFR} and the polar--light retraction $R^{\textsf{PL}}$ \eqref{eq:varphi_U_Onp2_efficient}.

\Cref{fig:testfig} displays the error curves for dimensions $n=1000$ and \\
$p\in\{400,200,100,50\}$. The maximal errors are listed in \Cref{tab:errors}. 

\begin{figure}[t!]
    \centering
    \begin{subfigure}[t]{0.45\textwidth}
        \centering
        \includegraphics[width=1.0\textwidth]{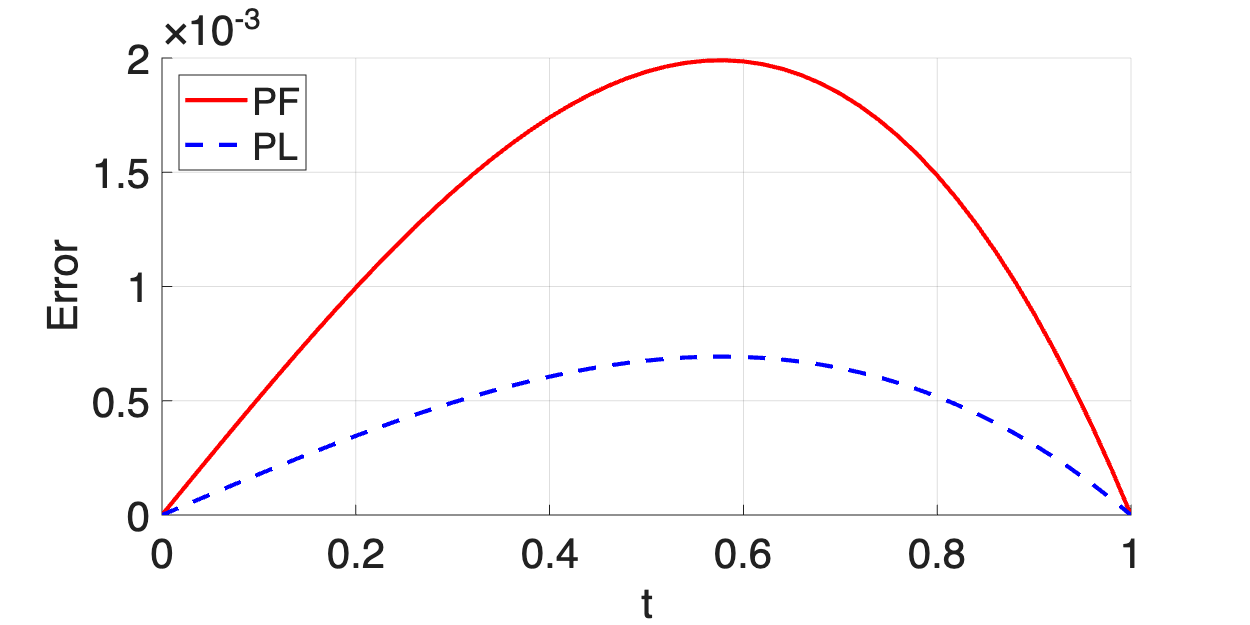}
        \caption{for $U_0,U_1\in \St(n,p)$, $n=1000, p=400$, dist$(U_0,U_1)=\frac{\pi}{2}$.}
    \end{subfigure}%
    ~ 
    \begin{subfigure}[t]{0.45\textwidth}
        \centering
        \includegraphics[width=1.0\textwidth]{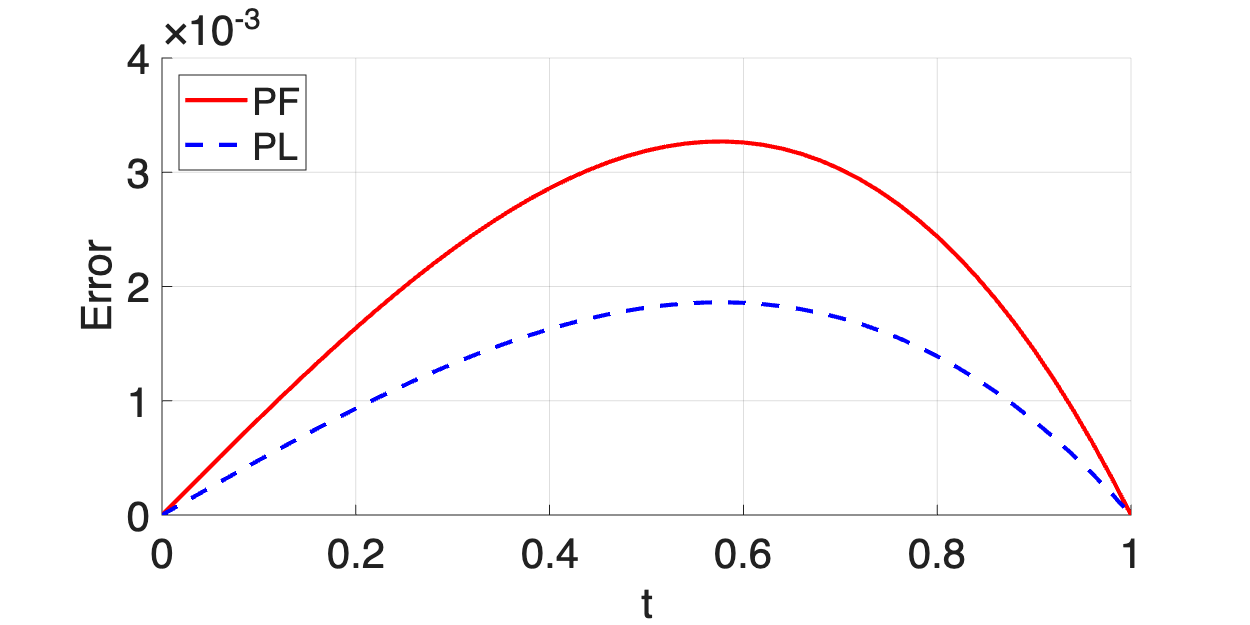}
        \caption{for $U_0,U_1\in \St(n,p)$, $n=1000, p=200$, dist$(U_0,U_1)=\frac{\pi}{2}$.}
    \end{subfigure}
     \begin{subfigure}[t]{0.45\textwidth}
        \centering
        \includegraphics[width=1.0\textwidth]{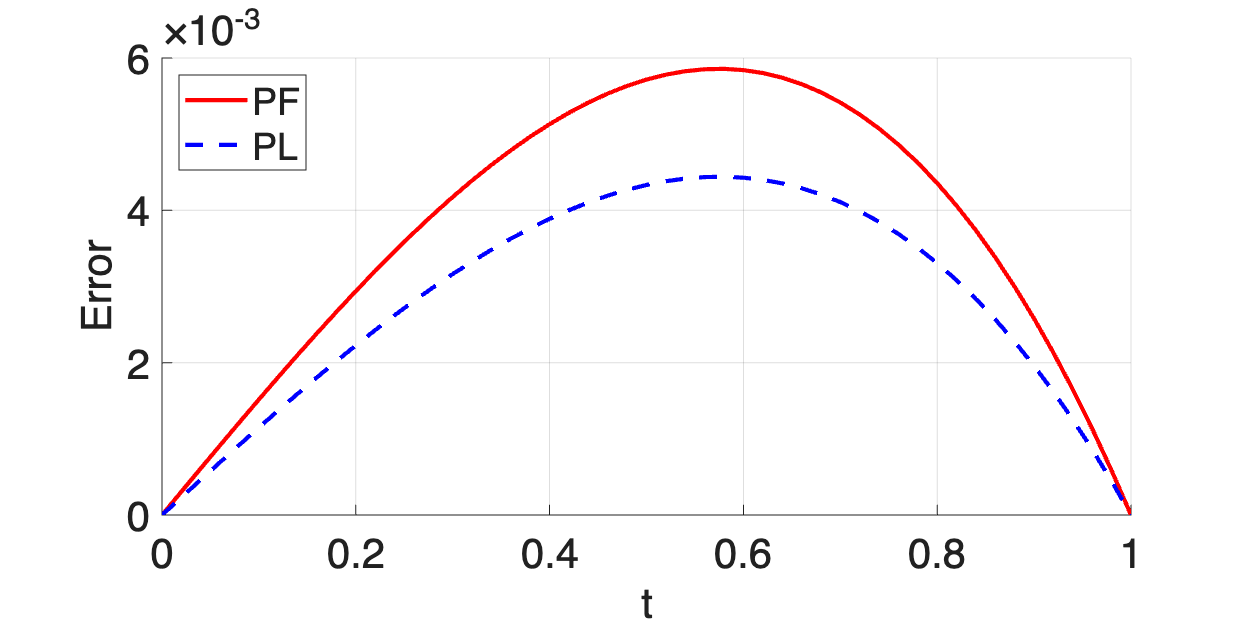}
        \caption{for $U_0,U_1\in \St(n,p)$, $n=1000, p=100$, dist$(U_0,U_1)=\frac{\pi}{2}$}
    \end{subfigure}%
    ~ 
    \begin{subfigure}[t]{0.45\textwidth}
        \centering
        \includegraphics[width=1.0\textwidth]{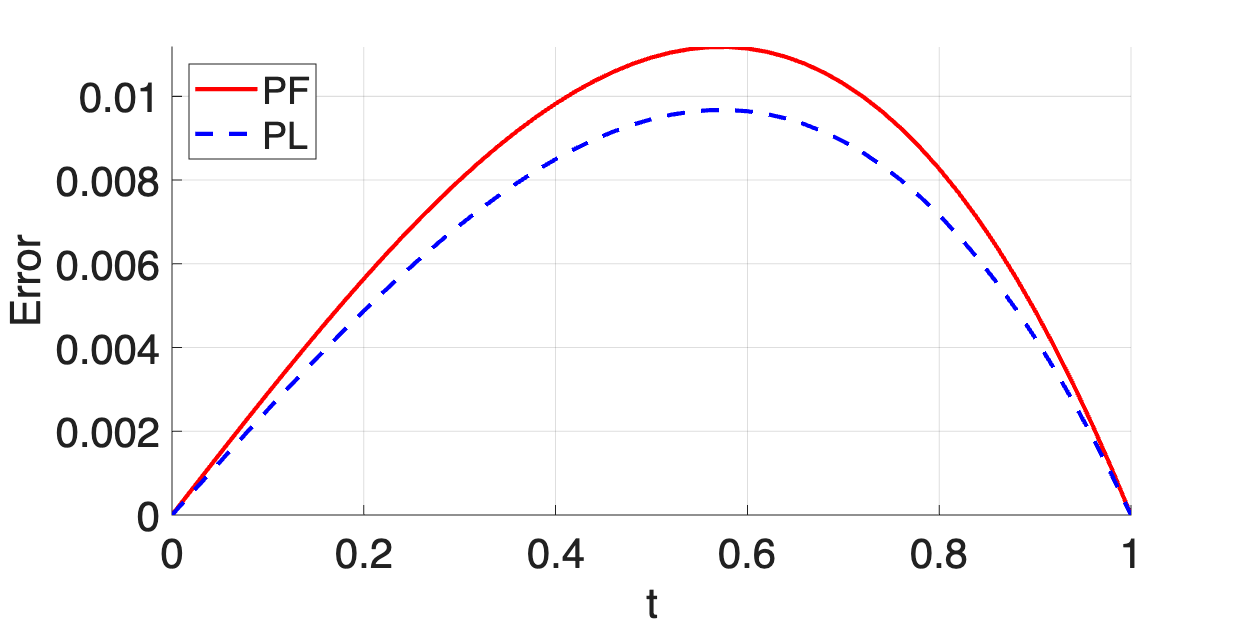}
        \caption{for $U_0,U_1\in \St(n,p)$, $n=1000, p=50$, dist$(U_0,U_1)=\frac{\pi}{2}$}
    \end{subfigure}   
    \caption{The figure shows the error $\|\Exp_{U0}(t\xi) - R_{U_0}(t\xi_R)\|_{F}$ between the Riemannian geodesic connecting $U_0$ and $U_1 = \Exp_{U_0}(t\xi)\big\vert_{t=1}$ and a retraction curve from $U_0$ to the same point $U_1 = R_{U_0}(t \xi_R)\big\vert_{t=1}$.
    As retractions, the polar factor retraction $R^{\textsf{PF}}$ (PF, solid red line) and the 
    polar-light retraction $R^{\textsf{PL}}$ (PL, dashed blue line) are considered.}
    \label{fig:testfig}
\end{figure}

\begin{table}[htbp]
\footnotesize
\begin{center}
  \begin{tabular}{|l|c c|} \hline
   $p$ &  max error PF &  max error PL \\ \hline
    400 & 1.99e-3 & 6.93e-4 \\
    200 & 3.27e-3 & 1.86e-3 \\ 
    100 & 5.86e-3 & 4.45e-3 \\ 
    50  & 1.12e-2 & 9.67e-3 \\ \hline
  \end{tabular}
\end{center}
\caption{Error maxima of the retraction curves considered in \Cref{fig:testfig}}\label{tab:errors}
\end{table}
%
%
As can be seen from the figure and the table,
the polar-light retraction is consistently closer to the Riemannian geodesic than the full polar factor retraction. The difference is more pronounced for larger values of $p$. Intuitively, this makes sense, since the difference between the PF- and the PL-retraction is only in the treatment of the $(p\times p)$-block $A$, cf. \eqref{eq:classic_PFR} and \eqref{eq:varphi_U_Onp2_efficient}.
As shown in \Cref{lem:light_between_PF_RL}, if only the $A$-component were present, the polar-light retraction would become the Riemannian exponential, while for $A=0$ it coincides with the full polar factor retraction.
%
%
%
%
%
%
%

\subsection{Accuracy of the inverse polar--light retraction relative to the Riemannian logarithm}
The next experiment addresses how close the inverse retraction $(R^{\textsf{PL}})^{-1}$ is to the Riemannian logarithm (RL) under the Euclidean metric.
Again, we start with a pseudo-random data triple
$U_0,U_1\in \St(n,p)$, $\xi\in T_{U_0}\St(n,p)$ such that 
\[
    \Exp_{U_0}(\xi) = U_1,\quad \dist_{E}(U_0,U_1)= \frac{\pi}{2},
\]
and compute the geodesic
$\gamma_E: [0,1]\to \St(n,p), t \mapsto \Exp_{U_0}(t\xi).$

The geodesic is a manifold curve that we map back to the coordinate domain, in this case the tangent space, via the inverse of a retraction $R_{U_0}$.
To assess how close the inverse retraction under consideration is to the Riemannian logarithm, we compute the error
\[
    \|\Log_{U_0} (\gamma_E(t)) - R_{U_0}^{-1}(\gamma_E(t)\|_F.
\]
In theory, when the Riemannian logarithm is used as an inverse retraction,
the tangent space curve is $t\mapsto t\xi$.
However, the Riemannian logarithm is not available in closed form but has to be computed by an iterative procedure \cite{ZimmermannHueper2022, MataigneStiefelLog:2025}. Therefore,
we also assess the accuracy
\[
    \|\Log_{U_0} (\gamma_E(t)) - t\xi\|_F.
\]
As inverse retractions, we use the inverses of the full polar factor retraction (PF) \cite[Algorithm 1]{Kaneko:2013} and the inverse of the polar-light retraction (PL) \eqref{eq:psi_U_Onp2_efficient}.
\Cref{fig:testfig2} (a) and (b) displays the error curves for dimensions $n=1000$ and $p\in\{400,50\}$.

We repeat the experiment in the exact same set-up, but replace the inverse polar-light retraction with its Cayley accelerated variant (PL Cay) of \eqref{eq:psi_U_Onp2_cay}, The results are presented in \Cref{fig:testfig2} (c) and (d). To the naked eye, the results are indifferent.
\begin{figure}[t!]
    \centering
    \begin{subfigure}[t]{0.45\textwidth}
        \centering
        \includegraphics[width=1.0\textwidth]{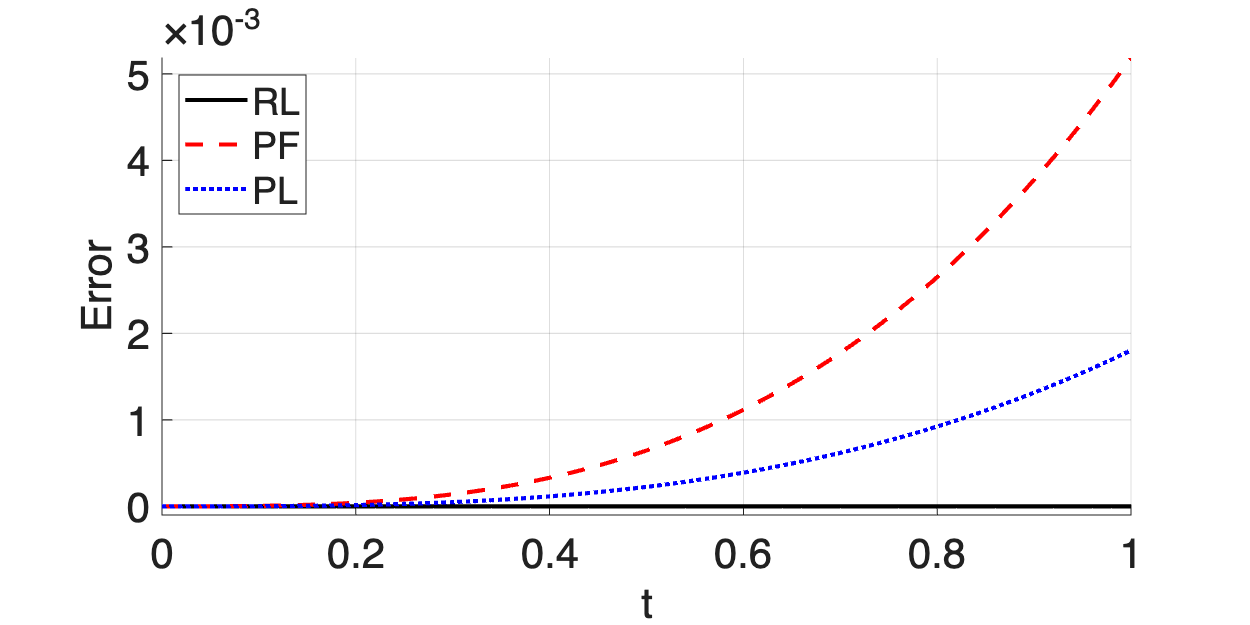}
        \caption{for $U_0,U_1\in \St(n,p)$, $n=1000, p=400$, dist$(U_0,U_1)=\frac{\pi}{2}$.}
    \end{subfigure}%
    ~ 
    \begin{subfigure}[t]{0.45\textwidth}
        \centering
        \includegraphics[width=1.0\textwidth]{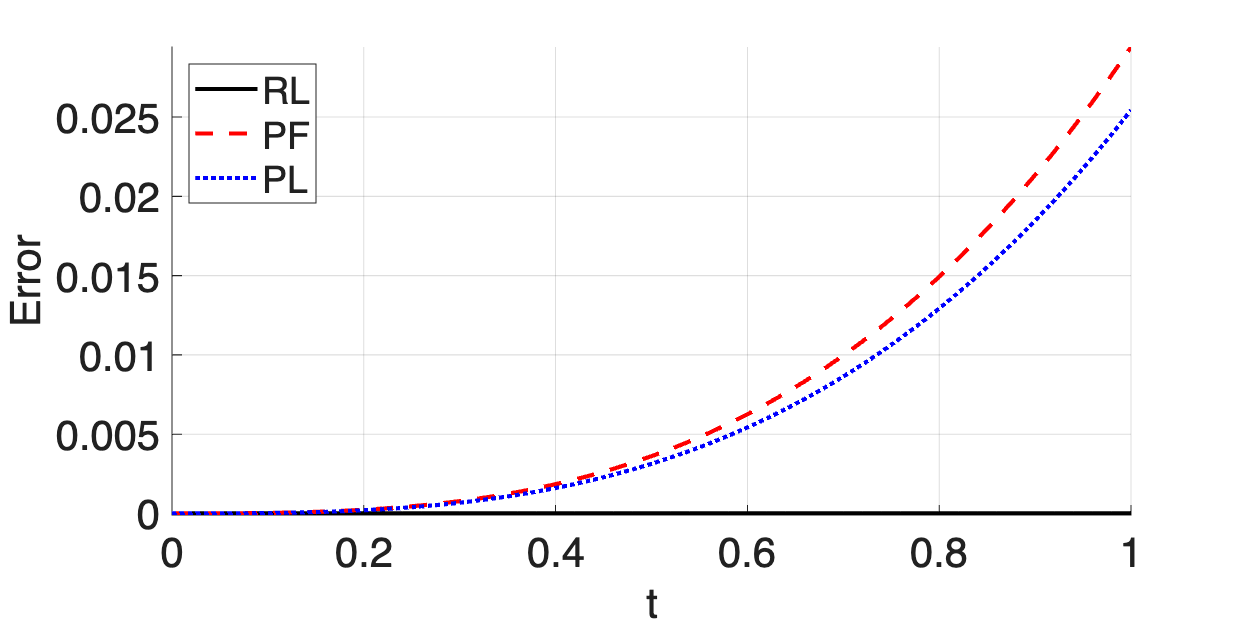}
        \caption{for $U_0,U_1\in \St(n,p)$, $n=1000, p=50$, dist$(U_0,U_1)=\frac{\pi}{2}$}
    \end{subfigure}  
    -
    \begin{subfigure}[t]{0.45\textwidth}
        \centering
        \includegraphics[width=1.0\textwidth]{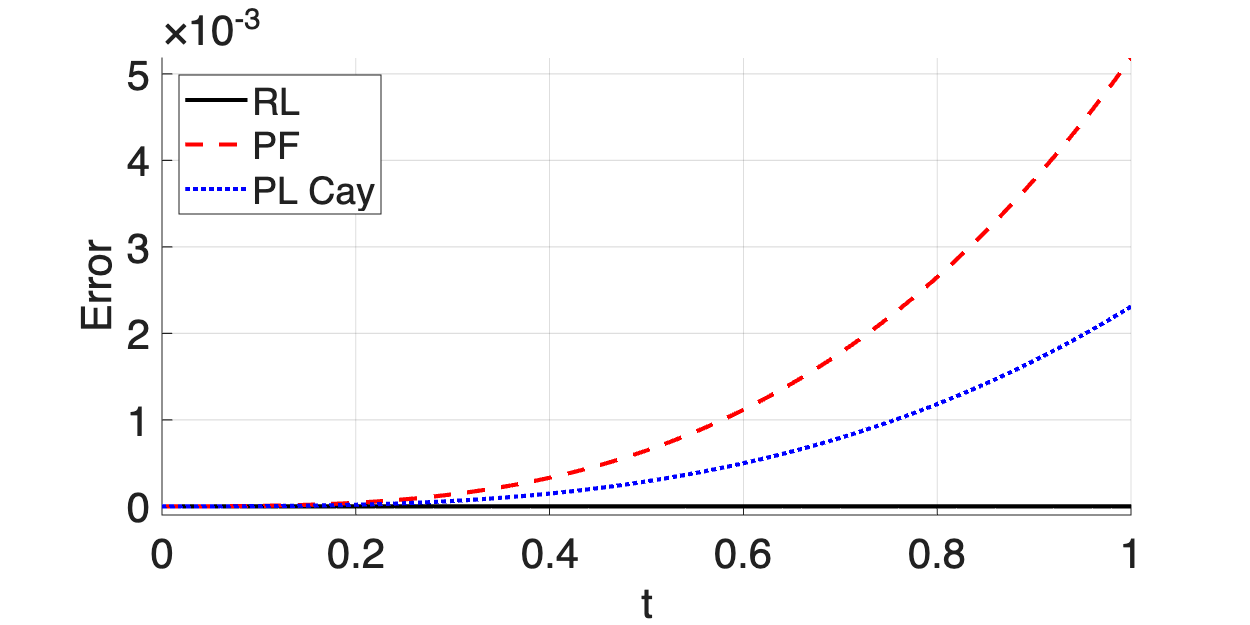}
        \caption{for $U_0,U_1\in \St(n,p)$, $n=1000, p=400$, dist$(U_0,U_1)=\frac{\pi}{2}$.}
    \end{subfigure}%
    ~ 
    \begin{subfigure}[t]{0.45\textwidth}
        \centering
        \includegraphics[width=1.0\textwidth]{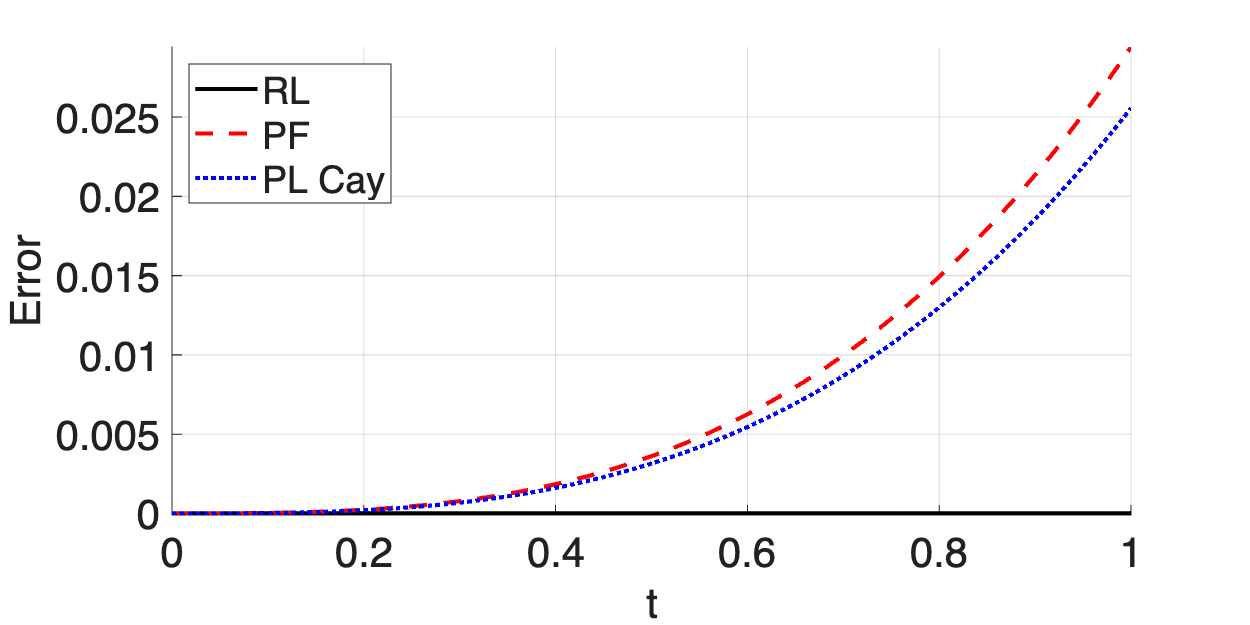}
        \caption{for $U_0,U_1\in \St(n,p)$, $n=1000, p=50$, dist$(U_0,U_1)=\frac{\pi}{2}$}
    \end{subfigure}  
    \caption{The plots (a) and (b) shows the errors $\|\Log_{U0}(\gamma_E(t)) - R_{U_0}^{-1}(\gamma_E(t))\|_{F}$ between the 
    coordinate curves obtained from pulling back a Riemannian geodesic $t\to \gamma_E$
    to the tangent space with the Riemannian logarithm and 
    the inverse polar factor retraction \eqref{eq:classic_PFR} (PF, dashed red line) and the inverse
    polar-light retraction \eqref{eq:varphi_U_Onp2_efficient} (PL, dotted blue line) .
    By construction, the coordinate curves all start from $0 = \Log_{U0}(\gamma_E(0))= \Log_{U_0}(U_0) = R_{U_0}^{-1}(0)$ in the tangent space.
    The baseline error is $\|\Log_{U0}(\gamma_E(t)) - t\xi\|_{F}$, which theoretically is zero, but practically illustrates the numerical accuracy of the Riemannian logarithm (RL, solid black line). In the plots (c) and (d) the matrix logarithm is replaced with the inverse Cayley transform in the inverse polar-light retraction (PL Cay, dotted blue line). The results are virtually the same}
    \label{fig:testfig2}
\end{figure}

The maximal errors are listed in \Cref{tab:errors2}. 

\begin{table}[htbp]
\footnotesize
\begin{center}
  \begin{tabular}{|l|c c c c|} \hline
   $p$  & max error RL (ref) & max error PF &  max error PL & max error PL Cay\\ \hline
    400 & 2.39e-12          & 5.18e-3     & 1.80e-3      & 2.31e-3\\
    50  & 2.50e-13          & 2.94e-2     & 2.54e-2      & 2.56e-2\\ \hline
  \end{tabular}
\end{center}
\caption{Error maxima of the retraction curves considered in Figure \ref{fig:testfig2}}
\label{tab:errors2}

\end{table}

%

%
%
%
%
%
%
%

\subsection{Computation time}\label{sec:experiments_timing}

To assess the computational effort, we create $100$ pseudo-random points $U_1\in\St(n,p)$ of dimensions $n=1000,p=400$, and we fix a $U_0\in\St(n,p)$.
Then, for each retraction $R_{U_0}$ included in the competition,
we measure the computation time\footnote{Timing results are obtained with Python 3 on a MacBook Air M2 with 16GB RAM
} of performing $100$ calculations
of $R_{U_0}^{-1}(U_1)$.
On the side, we also assess the identity $R_{U_0}^{-1} \circ R_{U_0}=\id_{T_{U_0}\St(n,p)}$ by computing the norm
$\|R_{U_0}^{-1} \circ R_{U_0}(\xi)-\xi\|_F$ averaged over the number of runs. In \Cref{tab:timings}  we report the average runtime and residual norm for the full polar factor retraction (PF) of \eqref{eq:classic_PFR}, the polar--light retraction (PL) of \eqref{eq:varphi_U_Onp2_efficient},  the PL retraction with $\exp_m$ replaced with $\Cay$ (PL Cay) as in 
\eqref{eq:varphi_U_Onp2_cay}, and the Cayley retraction (Cayley) of \eqref{eq:Cayley_retraction_p}. Moreover, for reference, we consider the Grassmann--like quasi geodesic (QD. Gr--like) \cite[Equation 3]{Bendokat_quasiGeo2021} and the QR retraction (QR) \cite[Equation 4.8]{AbsilMahonySepulchre2008}.

By comparing \eqref{eq:classic_PFR} to \eqref{eq:varphi_U_Onp2_efficient}, it is clear that the (forward) polar-light retraction is more costly to evaluate than the full polar factor retraction.
Apart from computations that do not depend on $n$, the essential difference is that the former features a term $(UM+ \xi)$, while for the latter, the corresponding term is only $(U+\xi)$. 
The table shows that all retraction maps feature an acceptable round-off error with respect to the structure identity $R_{U_0}^{-1} \circ R_{U_0}=\id_{T_{U_0}}$.
For the dimensions of $n=1000, p=400$, the inverse Cayley retraction is the most efficient one to evaluate on a single-query basis. It is about five times faster than the inverse polar-light retraction, which ranks second, and is, in turn, about five times faster than the next best competitor.
For the dimensions of $n=10000, p=1000$, the ranking is the same, but the speed-up factor between Cayley and PL Cay is reduced to $1.7$.

%
\begin{table}[htbp]
    \centering
    \footnotesize
    \begin{tabular}{l  c c  c c  }\hline
    & \multicolumn{2}{c}{$n=1000,p=400$}& \multicolumn{2}{c}{$n=10000,p=1000$}\\ 
    Retraction $R_U$ & avg. time (s.)& avg. error &avg. time (s.)& avg. error \\\hline
    inv. PF             & 0.099 & 2.13e-13 & 1.374 & 5.61e-13 \\
    inv. PL             & 0.093 & 1.54e-13 & 1.066 & 7.60e-13\\
    inv. PL Cay.     & 0.021 & 7.76e-14 & 0.354 & 1.88e-13\\
    inv. QD. Gr--like    & 0.116 & 1.54e-13 & 1.922 & 7.68e-13\\
    inv. QR             & 0.298 & 4.41e-14 & 4.621 & 1.19e-13\\
    inv. Cayley     & 0.004 & 5.27e-14 & 0.210 & 1.51e-13\\ \hline
    \end{tabular}
    \caption{Average computation time based on 100 inverse retraction evaluations on $\St(n,p)$. The error is computed as $\|R_{U_0}^{-1} \circ R_{U_0}(\xi)-\xi\|_F$, where $R_{U_0}$ is the selected retraction.}
    \label{tab:timings}
\end{table}
%
%
%
%
To further measure the computation time, we compute a pseudo-random data triple
$U_0,U_1\in \St(n,p)$, $\xi\in T_{U_0}\St(n,p)$ such that 
\[
    \Exp_{U_0}(\xi) = U_1,\quad \dist_{e}(U_0,U_1)= \pi,
\]
where $\dist_e$ denotes the Euclidean Stiefel metric associated with $\beta = 1$ in \eqref{eq:St_exp}.
We consider dimensions of $n=10.000$ and $p\in\{500,1000,1500,2000\}$.
In each case, we map the data `there and back', i.e., we compute
\[
  \tilde U_0=R_{U_0}(\xi),\quad  \tilde \xi = R_{U_0}^{-1}(\tilde U_0) \text{ for } R\in\{R^{\textsf{PF}}, R^{\textsf{PL}},R^{\textsf{PL Cay}},R^{\textsf{Cay}}\}.
\]
We average the wallclock times over 10 random runs.
\begin{figure}[t!]
    \centering
    \includegraphics[width=1.0\textwidth]{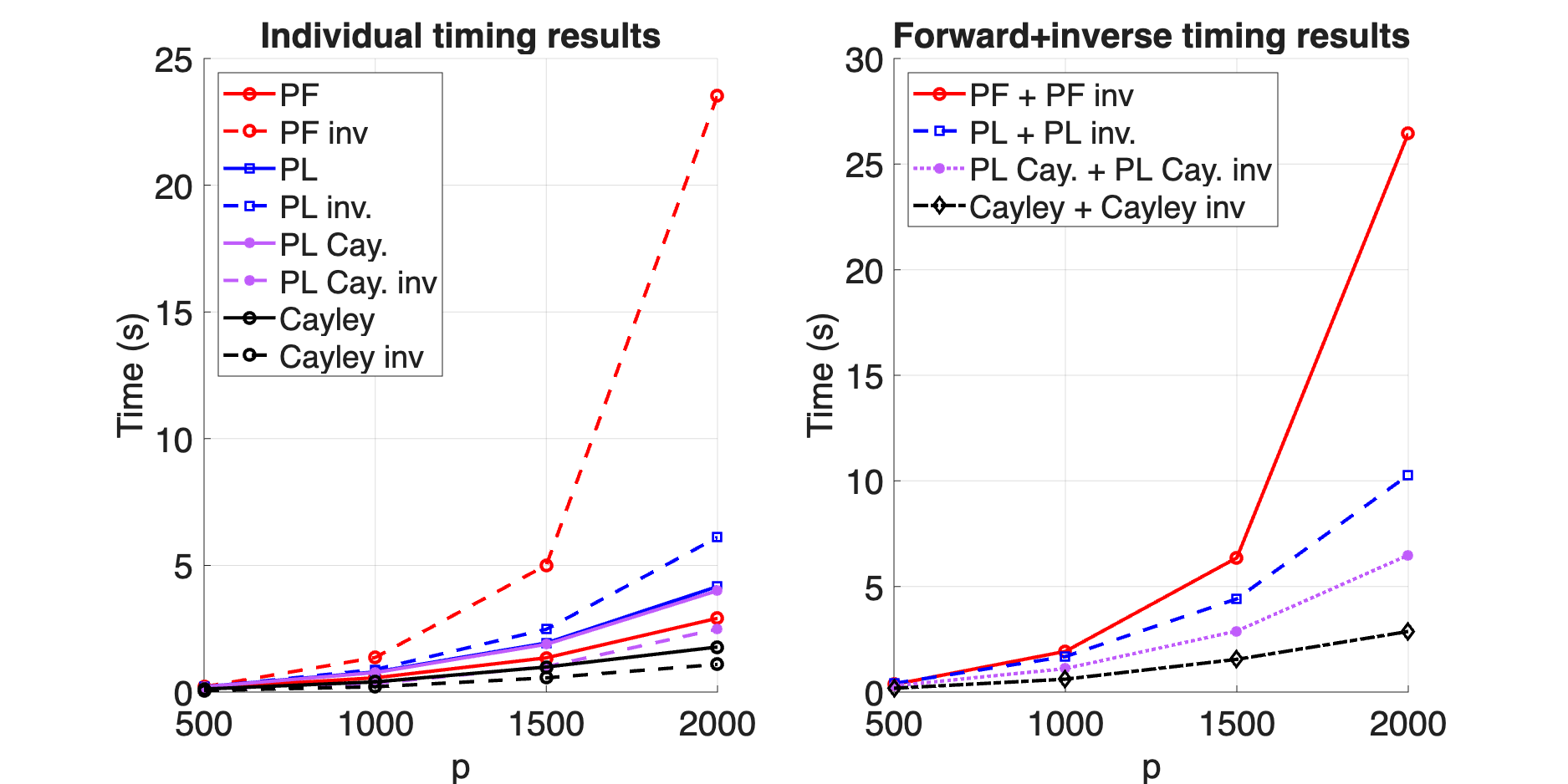}
    \caption{Timing results for computing the various retractions and their inverses for increasing dimension $p$. 
    }
    \label{fig:timefig}
\end{figure}
\Cref{fig:timefig} displays the results.
On the left, we observe that $R^{\textsf{Cay}}$ is the most efficient to evaluate, and the same holds for its inverse.
$R^{\textsf{PL}}$ and its inverse show favorable growth both in forward and inverse mode, when compared to the polar factor retraction. 
This is confirmed by the right subplot of \Cref{fig:timefig}, which shows the added costs of going `there and back' with the retractions under consideration.
At a dimension of $p=2000$, the $R^{\textsf{Cay}}$ is more than eight times faster than $R^{\textsf{PF}}$. $R^{\textsf{PL Cay}}$ is, by comparison, more than three times faster than $R^{\textsf{PF}}$. We observe that computing $(R^{\textsf{PL}})^{-1}$ is slower to compute than $R^{\textsf{PL}}$. We find that this is due to the expensive matrix logarithm featuring in $(R^{\textsf{PL}})^{-1}$.

As a third experiment, we simulate a multi-queries scenario. Given data points $U_0,U_1$, we consider the computational costs of obtaining a tangent vector \\
$\xi\in T_U\St(n,p)$ so that $U_1=R_{U_0}(\xi)$, and the subsequent costs of evaluating $\gamma_R (t_k)=R_{U_0}(t_k\xi)$, where $t_k=\frac{k}{N-1}$ for $k=0,\dots,N-1$. This mimics the procedures that are required for quasi-linear manifold interpolation.
In this setting, it is beneficial to implement $t$--dependent formulas, so that one reuses matrix--matrix products and matrix decompositions, which allows for efficient probing of the curve $t\mapsto\gamma_R (t)$.
For the $R^{\textsf{PL}}$ and the Cayley--accelerated variant $R^{\textsf{PL Cay}}$, this was discussed in \Cref{sec:eff_ret_comp}. We compare these methods to five other retractions.
\\
$\bullet$ \textbf{The polar factor retraction:} 
The PF retraction can be $t$-parameterized by computing an SVD $\xi = \Gamma \Sigma V^T$. This yields $(I_p+t^2\xi^T\xi)^{-\frac{1}{2}}=V(I_p+t^2\Sigma^2)^{-\frac{1}{2}}V^T$. Hence, we can avoid computing a dense matrix square root and --inverse for each $t$, and we only need to compute the SVD of $\xi$ once. In the implementation, we only have to apply the square root- and the inverse operations on the diagonal of $(I_p+t^2\Sigma^2)^{-\frac{1}{2}}$.  Thus $R^{\textsf{PL}}_{\hat U}(t\xi)=(\hat U + t\xi)V(I_p+t^2\Sigma^2)^{-\frac{1}{2}} V^T$. 
\\
$\bullet$ \textbf{Quasi--geodesics:} \cite[Propostion 1]{Bendokat_quasiGeo2021} provides a formula for evaluations of a Grass\-mann--like quasi geodesic (QG Gr.). In the setting of geodesic interpolation, we can alternatively obtain a matrix $S=\begin{bmatrix}
        A&B^T\\
        B&C
    \end{bmatrix}\in\Skew(2p)$ and $Q\in \St(n,p)$ so that the curve $\rho(t)=\begin{bmatrix}
        \hat U&Q
    \end{bmatrix}\exp_m\left(tS\right)\begin{bmatrix}
        I_p\\
        0   
    \end{bmatrix}$ connects two points $\hat U, U$ \cite[Algorithm 2]{Bendokat_quasiGeo2021} (QG St.). The terms featuring in the formula can be precomputed. Replacing the exponential term with the Cayley map, and using the inverse Cayley map instead of $\log_m$ in Step 8 of \cite[Algorithm 2]{Bendokat_quasiGeo2021} yields a Cayley accelerated variant (QG St. Cay). 
\\
$\bullet$ \textbf{QR retraction:} We also include the QR retraction (QR), whose inverse can be computed by solving a sequence of $p$ linear equation systems and is provided in \cite[Algorithm 1]{Kaneko:2013}. 
A $t$--parametrization is not obvious, as we have to compute the unqiue QR decomposition $U+t\xi=QR$.  
\\
$\bullet$ \textbf{Cayley retraction:} The Cayley retraction cannot be $t$-parameterized in the same way as the polar- or polar-light retraction, since the matrices $B^TB$ and $A$ that appear under the inverse in \eqref{eq:Cayley_retraction_p} do not commute in general. We continue to use formula \eqref{eq:Cayley_retraction_p} to compute $R^{\textsf{Cay}}$.
\\
For each retraction $R$, we measure the computation time of obtaining $\xi=R^{-1}_{U_0}(U_1)$, as well as, where applicable, the time it takes to precompute relevant matrix decompositions and relevant matrix--matrix products (preprocessing). We then measure the total time it takes to compute the set of points $\{R_{U_0}(t_k\xi)\}_{k=0}^{N-1}$ using their $t$--dependent formulations (interpolation), where we take $N=11$. 
\begin{figure}[!ht]
    \centering
    \includegraphics[width = .9\textwidth]{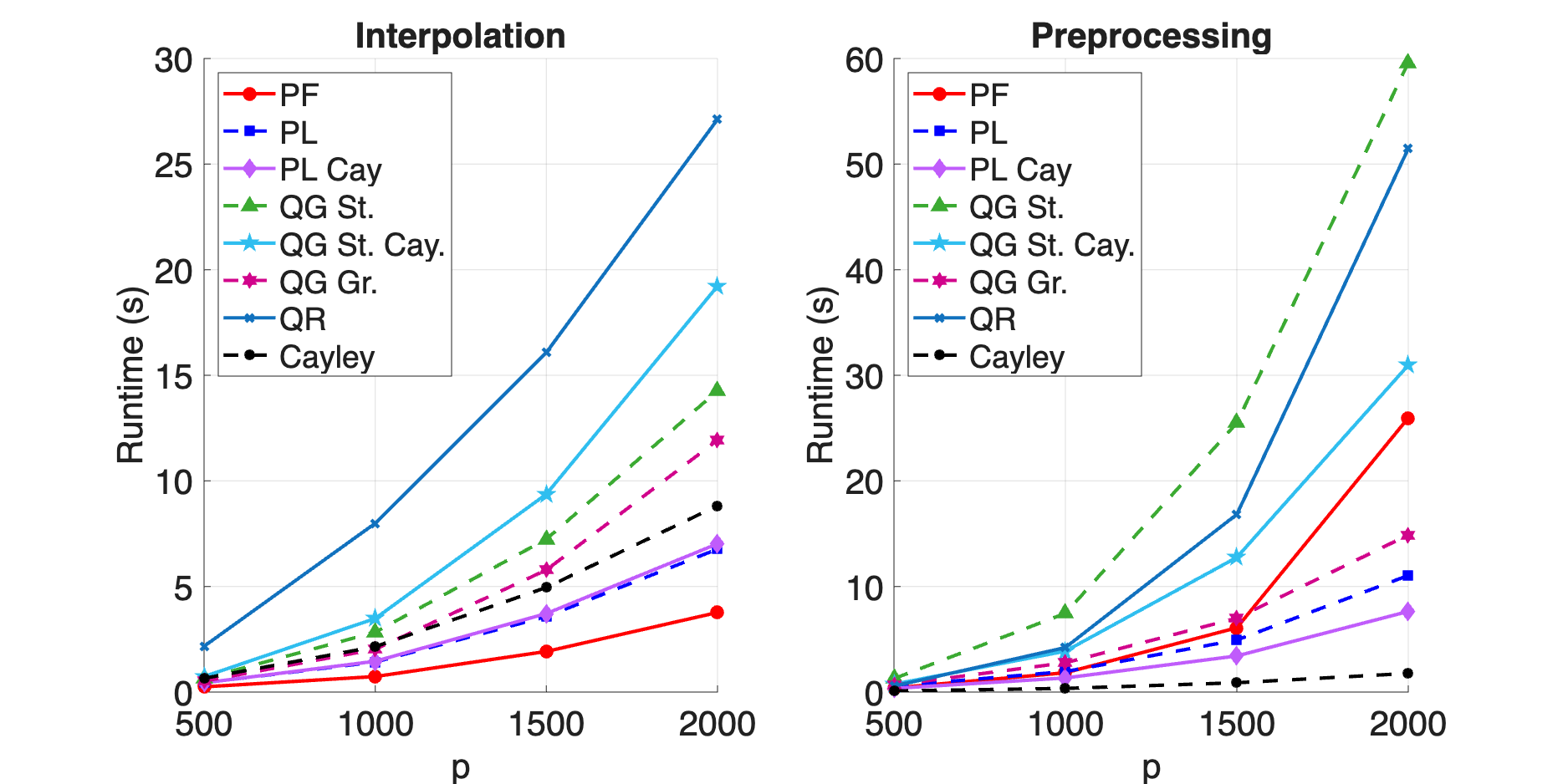}
    \caption{Timing results for computing the retractions along a fixed tangent vector $\xi$ with different scaling $t$ for increasing dimension $p$, mimicking the process of linear interpolation (left). The preprocessing costs are also displayed (right).
    }
    \label{fig:runtime_along}
\end{figure}
In \Cref{fig:runtime_along} we present the results on $\St(n,p)$ with $n=10000$ and $p\in \{500,1000,1500,2000\}$. We observe that the computational cost of the preprocessing step increases with $p$, similar to what we observed in \Cref{fig:timefig}. We see that the preprocessing time for computing $R^{\textsf{Cay}}$ is smaller than that of all the other retractions, and that the combined cost of interpolation and preprocessing at $p=2000$ is also smallest for $R^{\textsf{Cay}}$, despite the apparent higher cost of computing the points $\{R^{\textsf{Cay}}_{U_0}(t_k\xi)\}_{k=0}^{N-1}$. When ignoring the preprocessing step, $R^{\textsf{PF}}$ is most efficient in the interpolation step. $R^{\textsf{Cay}}$ fails to be as efficient as e.g. $R^{\textsf{PF}}$, since a dense ($p\times p$) matrix has to be formed and inverted for each $t_k$. It is noteworthy that computing the points $\{R^{\textsf{PL}}_{U_0}(t_k\xi)\}_{k=0}^{N-1}$ is faster than computing $\{R^{\textsf{PL Cay}}_{U_0}(t_k\xi)\}_{k=0}^{N-1}$. This is due to the efficient computation of the matrix exponential of a block--diagonal matrix.
We observe that the preprocessing time for $R^{\textsf{PL}}$ and $R^{\textsf{PL Cay}}$ ranks 2 and 3, respectively, in terms of runtime.


%

\subsection{Interpolation of POD bases}\label{sec:exp_pod}

As a practical example, we consider the interpolation of orthonormal bases obtained by computing the proper orthogonal decomposition of snapshot data from numerical integration of a partial differential equation (so--called POD bases). POD bases arise in the context of model order reduction \cite{MOR2008}, where they encode the essential dynamics of the system of interest. 
An important point is that neither the data nor the application dictates a specific metric. It is therefore a-priori unclear, which retraction to choose.
We consider the Fisher--KKP equation with a diffusion term and Dirichlet boundary conditions
\begin{equation}\label{eq:Fishereq}
    \partial_t u(t,x)=\partial_{xx}u(t,x)+f(u(t,x);\rho), \ \ (t,x)\in [0,T]\times [-L,L],
\end{equation}
where $0<\rho<1$ and $f(u;\rho)=\rho u(1-u)$ is a nonlinear function \cite{Adomian:1995}.

After a finite--differences approximation of the spatial term $\partial_{xx}$ we consider the following forward Euler scheme 
\begin{equation}\label{eq:Fisher_Euler}
    u(t_{n+1})=u(t_{n},x)+h(D_{xx}u(t_n)+f(u(t_n);\rho)), 
\end{equation}
where $f$ is evaluated entry--wise on the vector $u(t)\in \R^{N_x}$ and the matrix $D_{xx}\in \R^{N_x\times N_x}$ stems from the finite--differences approximation.
We consider the initial value $u(0,x)=e^{-x^2/2}$ and evolve the system \cref{eq:Fisher_Euler} with $T=10, L=30, h = 10^{-3}$ and $N_x=100$, for $\rho\in \{0.1,0.5,0.9\}$. For each $\rho$, we obtain a POD basis from the snapshot matrix $Y^{(\rho)}=[u(t_0),\dots,u(t_{N_t-1})]$ by computing the $p$ dominant left--singular vectors $U^{(\rho)}\in\St(N_x,p)$ of $Y^{(\rho)}$. 

We would like to highlight that computing an analytic path of an SVD is challenging even when the singular values are distinct \cite{BunseGerstner1991}. Algorithms for computing the SVD of a matrix curve $X(t)$ may additionally introduce discontinuities in sampled singular vectors for $X(t_0)$ and $X(t_0+\varepsilon)$, due to sign switches. This issue can be overcome as in \cite{ZimmermannHermite_2020}: For a curve $X(t)=U(t)\Sigma(t)V(t)^T$ we fix $t_0$ and for each $t$ we compute $S=\textnormal{sign}(\diag(U(t)^T U(t_0)))$ and replace $U(t)$ with $U(t)S$ and $V(t)$ with $V(t)S$. In the context of piecewise--linear interpolation, one can select one of the two endpoints as reference for each interval under consideration. Yet, in the experiment at hand, no sign adjustement was necessary.

We perform piecewise--linear interpolation. For a pair of data points $U^{(\rho_i)}, U^{(\rho_{i+1})}$, we compute the Riemannian logarithm $\xi_{\rho_i}=\Log_{U^{(\rho_{i})}}(U^{(\rho_{i+1})})$ or the inverse retraction $\xi_{\rho_i}=R_{U^{(\rho_i)}}^{-1}(U^{(\rho_{i+1})})$ and for $\rho^*\in [\rho_i,\rho_{i+1}]$ we evaluate $\Exp_{U^{(\rho)}}((\frac{\rho^*-\rho_i}{\rho_{i+1}-\rho_i})\xi_{\rho_i})$ or $R_{U^{(\rho)}}((\frac{\rho^*-\rho_i}{\rho_{i+1}-\rho_i})\xi_{\rho_i})$, which yields the interpolated POD basis. The relative errors are shown in \Cref{fig:fisher_exp}, from which it can be seen that the polar--light retraction and Riemannian exponential are close. The smallest interpolation error is obtained using the Grassmann--like quasi geodesics.  The polar factor retraction, while being locally second--order accurate, deviates significantly from the true data in comparison with the other methods. This is is most likely the result of instability of the Lyapunov equation one has to solve \cite[Equation 19]{Kaneko:2013}, since we obtain tangent vectors $\xi_{\rho_i}=(R^{\textsf{PF}}_{U^{(\rho_i})})^{-1}(U^{(\rho_{i+1})})$ of large norm, relative to what we see for the other retractions.  To check for analytic dependence on $\rho$, one can plot the smallest singular value together with the immediate adjacent singular values of the snapshot matrix obtained from the scheme \cref{eq:Fisher_Euler}, and observe exponential-like growth as $\rho$ increases. 

As the experiment involves matrices of small dimension, we have chosen to leave out timing information, and we refer the reader to \Cref{sec:experiments_timing} for a discussion involving larger matrices. 

\begin{figure}[!ht]
    \centering
    \includegraphics[width=0.8\linewidth]{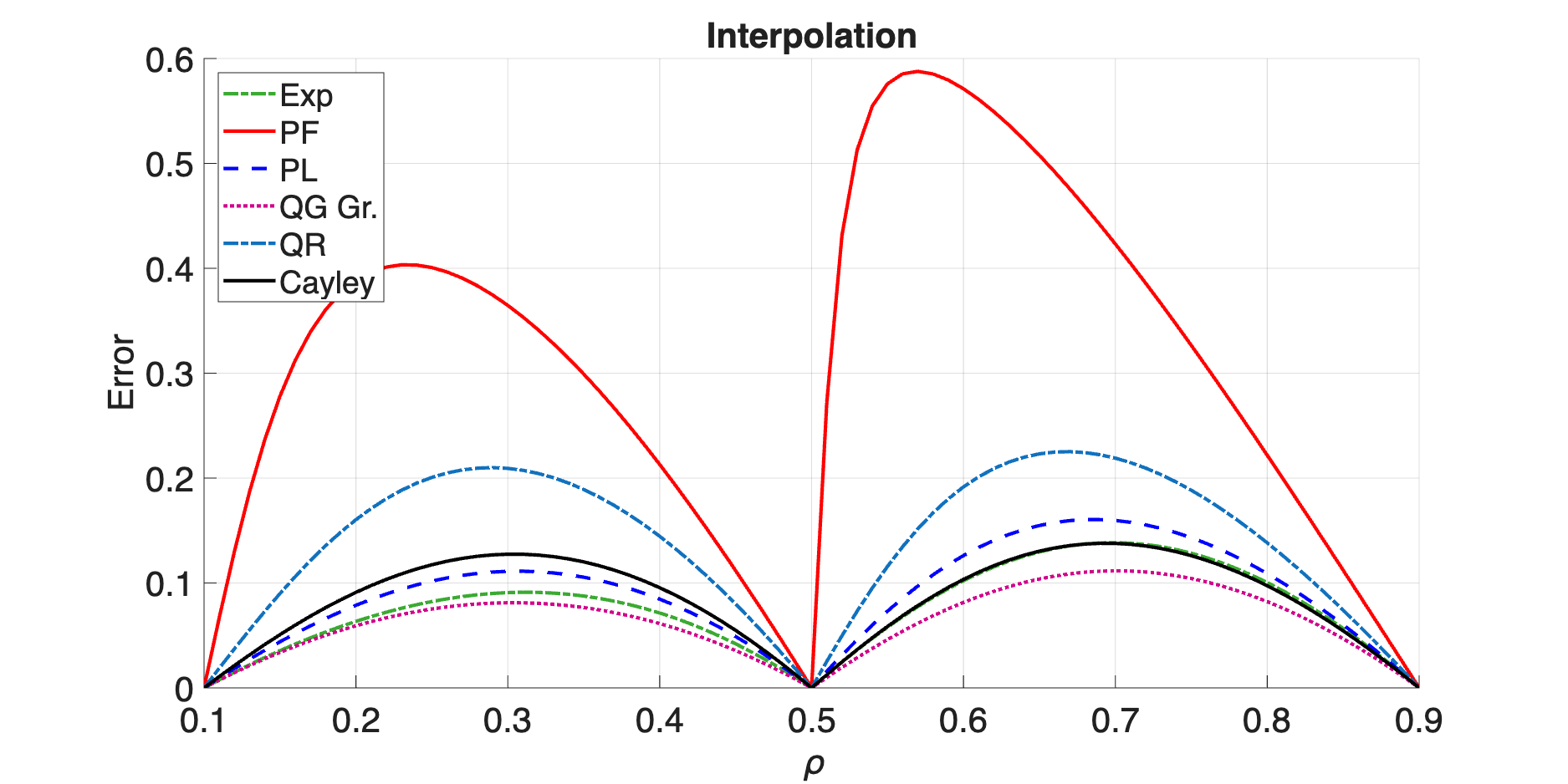}
    \caption{Relative errors associated to interpolating POD bases on $\St(N_x,p)$ for $\rho\in[0.1,0.9]$, computed as $\textnormal{error}=\frac{\| U_{\textnormal{int}}-U_{\textnormal{true}}\|_F }{\|U_{\textnormal{true}}\|_F}$. We observe that the polar--light retraction and Riemann normal coordinates perform similarly, while the polar factor retraction has higher error.}
    \label{fig:fisher_exp}
\end{figure}

\subsection{Computing the Riemannian barycenter of a geodesic triangle}\label{sec:exp_RBC}

The Riemannian center of mass (sometimes called the Fr\'echet-- or Karcher mean) is the manifold generalization of computing the mean of a dataset and was studied in \cite{Karcher1977}. Given a set of points $\{U_i\}\subset \St(n,p)$, the Riemannian center of mass $U_\mu$ is the solution of the following optimization problem
\begin{equation}\label{eq:RBC}
    U_\mu = \argmin_{U\in \St(n,p)}f(U)=\argmin_{U\in \St(n,p)}\frac{1}{2N}\sum_{i=1}^N \|\Log_U(U_i)\|^2.
\end{equation}
In contrast to the problem from \Cref{sec:exp_pod},  the formulation of the Riemannian barycenter inherently depends on the choice of a specific metric.
Hence, one may expect that, when the metric is, for example, Euclidean, retractions adapted to the Euclidean metric will perform better.
Solving \eqref{eq:RBC} by means of Riemannian steepest descent was discussed in \cite{AfsariTronVidal2013}, and it is guaranteed that a solution exists and is unique, provided that all data lie in an open ball $B_r(U')\subseteq \St(n,p)$ with $r\leq \frac{\pi}{2}$ \cite[Theorem 2.1]{Afsari2011}\footnote{In general, for a complete Riemannian manifold $(\mcM,g)$, the radius $r$ is bounded by $\frac{1}{2}\min\{\textnormal{inj}(\mcM),\frac{\pi}{\sqrt{\Delta}}\}$, where $\textnormal{inj}(\mcM)$ is the injectivity radius and $\Delta$ is the lower bound on the sectional curvature, where $\frac{1}{\sqrt{\Delta}}:=\infty$ if $\Delta<0$. For the Stiefel manifold equipped with the Euclidean metric, $\Delta =1$ is the sharp upper bound on the sectional curvature \cite{zimmermannstoye:2024}, provided $p\geq 2,n\geq p + 2$, and the injectivity radius is $\pi$ \cite{zimmermann2025injectivity}.}, but in practice larger domains of convergence are often observed.

The Riemannian gradient is given by \cite[Theorem 1.2]{Karcher1977}
\begin{equation}\label{eq:RBC_grad}
    \textnormal{grad}f(U)=-\frac{1}{N}\sum_{i=1}^N\Log_U(U_i).
\end{equation}
Computing \eqref{eq:RBC_grad} requires numerous evaluations of the Riemannian logarithm, which can be computationally costly. In this experiment we replace $\Log_U$ with an inverse retraction $R^{-1}$, and compute a barycenter of a geodesic triangle using Riemannian steepest descent, as outlined in \Cref{alg:SD_RBC}. Whenever we replace $\Log$ with an inverse retraction $R^{-1}$, we use the corresponding retraction $R$ to compute the next iterate (Step 4). For more details on Riemannian optimization, see the standard textbooks \cite{AbsilMahonySepulchre2008,boumal2023}.

\begin{algorithm}[!ht]
\footnotesize
\caption{Steepest descent method for computing $U_\mu$ in \eqref{eq:RBC}}
\label{alg:SD_RBC}
    \begin{algorithmic}[1]
    \REQUIRE Data $\{U_i\}\subset \St(n,p$, initial guess $U_0$, retraction and inverse retraction $R$, $R^{-1}$, tolerance $\tau>0$ and step size $\delta>0$
    \STATE k = 0
    \WHILE{$\|\textnormal{grad}f(U)\|_F>\tau$}
    \STATE Compute $\textnormal{grad} f(U_k)$ according to \eqref{eq:RBC_grad}, with $R^{-1}_{U_k}$ in place of $\Log_{U_k}$
    \STATE  $U_{k+1}=R_{U_k}(-\delta\textnormal{grad} f(U_k) )$
    \STATE $k = k + 1$
    \ENDWHILE
    \RETURN $U_\mu=U_{k+1}$ 
\end{algorithmic}
\end{algorithm}

We work under the Euclidean metric. To set up the experiment, let $U_0\in \St(n,p)$ and generate two random tangent vectors $\xi_1,\xi_2\in T_{U_0}\St(n,p)$ so that $\|\xi_1\|_F=\|\xi_2\|_F=0.8\pi$, and let $U_1=\Exp_{U_0}(\xi_1), U_2=\Exp_{U_0}(\xi_2)$ cf. \Cref{fig:geod_triangle}. 
$\xi_i=U_0A+(I_n-U_0U_0^T)B$ is generated so that it has a nonzero $B$--block. 
\begin{figure}[t!]
    \centering
    \includegraphics[width=0.6\textwidth]{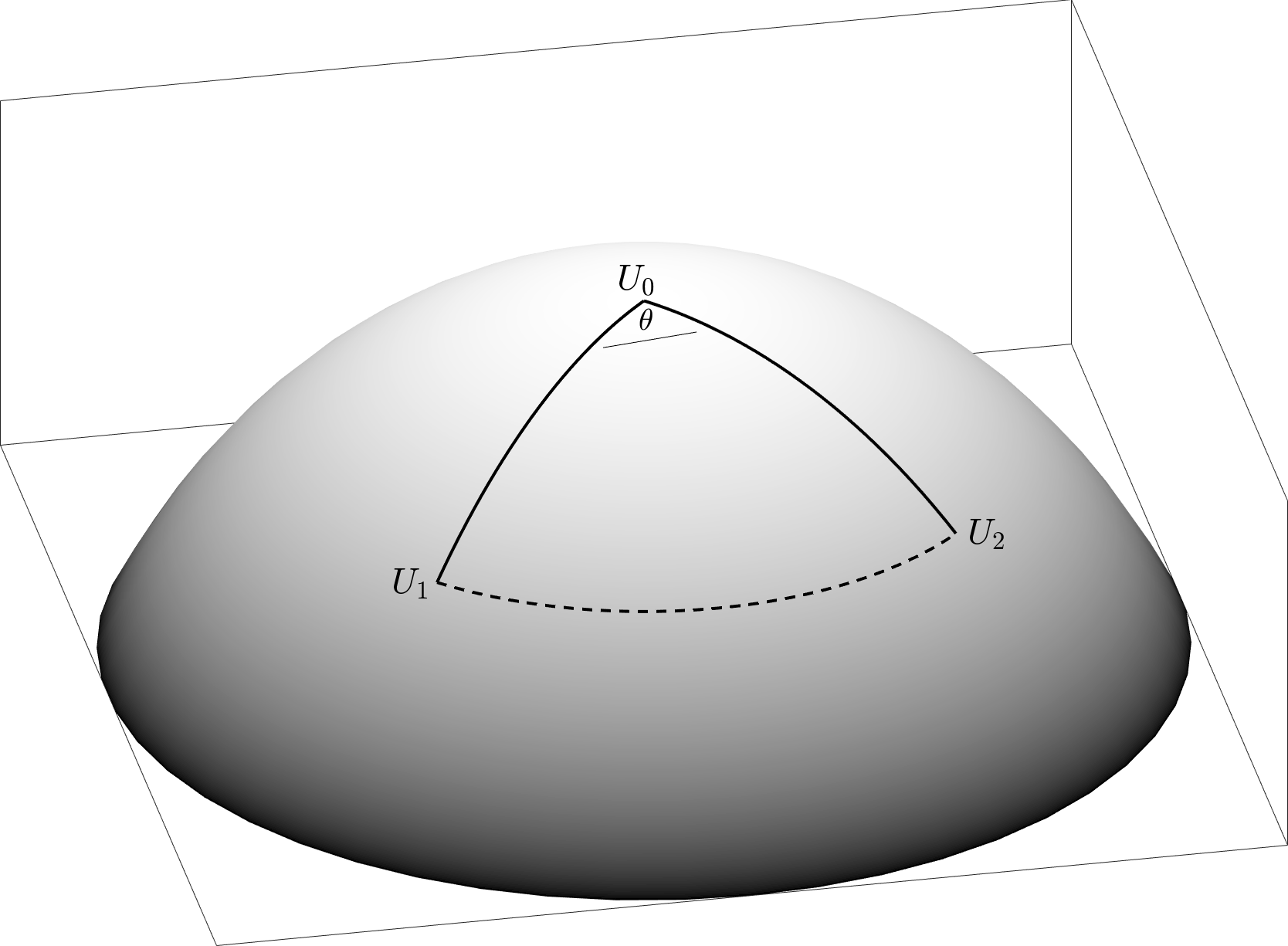}
    \caption{Schematic picture of a geodesic triangle on a manifold. The sides $U_0\to U_1$ and $U_0\to U_2$ have the
same lengths. We use a gradient descent to compute the Riemannian center
of mass.}
    \label{fig:geod_triangle}
\end{figure}
For each retraction we obtain a barycenter $U_{\mu,R}$, which we then compare to the Riemannian barycenter $U_{\mu}$ obtained by solving the optimization problem \eqref{eq:RBC} using the Riemannian exponential and --logarithm maps. 

We run \Cref{alg:SD_RBC} with tolerance $\tau=10^{-10}$ and measure the computational cost (in seconds) required to compute the Riemannian gradient and to subsequently compute the next iterate $U_{k+1}=R_{U_k}(-\delta \textnormal{grad} f(U_k))$ (steps 3 and 4), with a fixed step size $\delta = 0.5$. 

\Cref{fig:timings_RBC} displays the results of applying Riemannian steepest descent with various retractions and their inverses replacing the Riemannian logarithm in \eqref{eq:RBC_grad}. It is seen that using either of the retractions is significantly less costly than using the Riemannian exponential and --logarithm maps. \Cref{tab:RBC_tab} contains the results of the same experiment, as well as results of two additional experiments in smaller dimensions, where it is seen that the retractions in general allow for substantial reductions in computational costs, while the obtained barycenters remain relatively close to the one obtained using the Riemannian exponential and --logarithm. Using $R^{\textsf{PL Cay}}$ instead of $R^{\textsf{PL}}$ leads to a slight decrease in accuracy, while being significantly faster. 

Among the considered methods, the accuracy is lowest for the Cayley retraction. This showcases that the metric matters in this case, as the Cayley retraction is not a second--order retraction under the Euclidean metric. When running the same experiment under the canonical metric ($\beta = \frac{1}{2}$ in \eqref{eq:St_exp}) (not shown),  the Cayley retraction is found to be most beneficial in terms of both accuracy and speed. 
\begin{figure}
    \centering
    \includegraphics[width=0.8\linewidth]{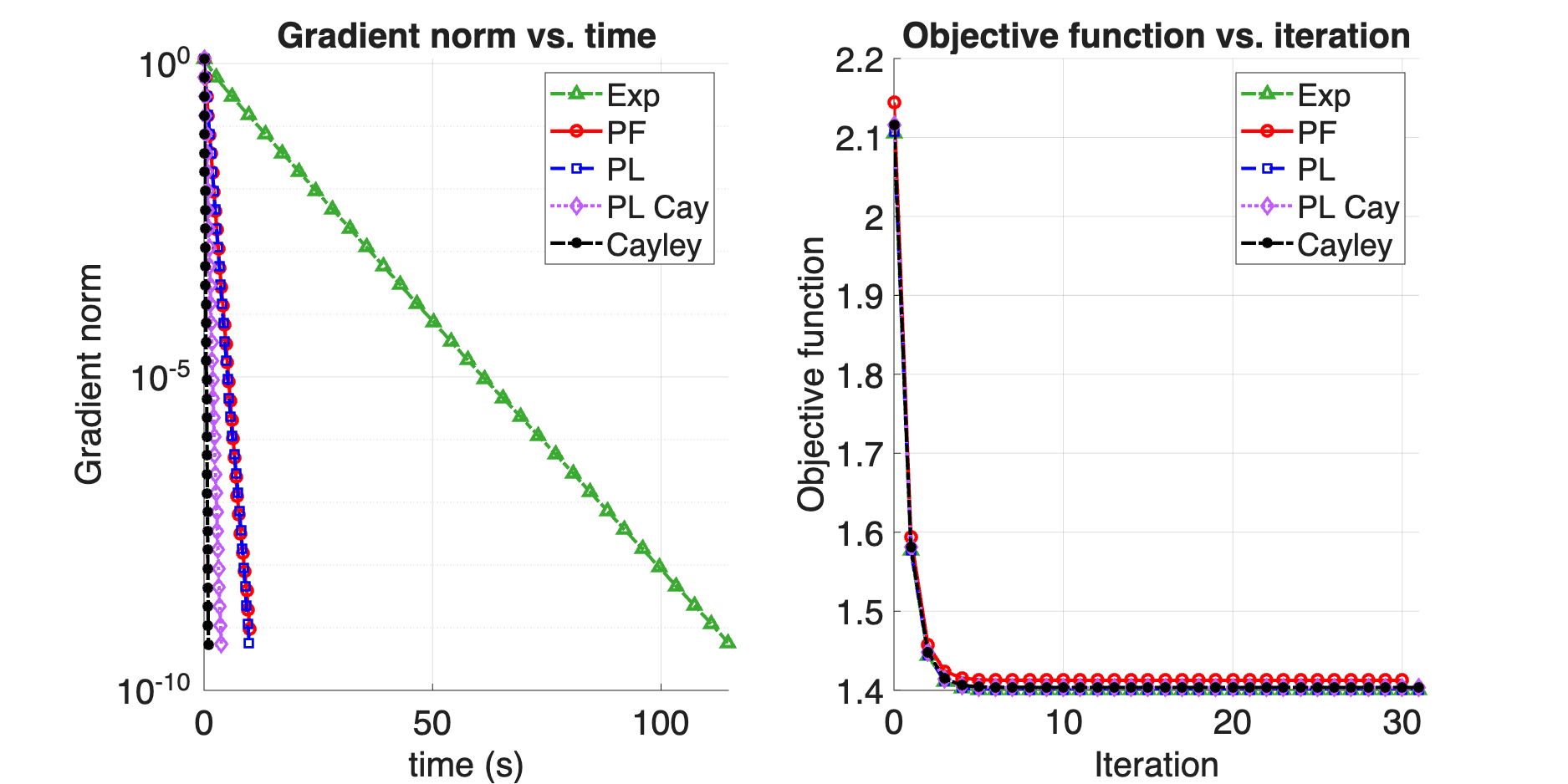}
    \caption{Results of applying Riemannian steepest descend to the problem \eqref{eq:RBC} on $\St(1500,400)$ to compute the Riemannian barycenter of a geodesic triangle. The results are also shown in the bottom of \Cref{tab:RBC_tab} }
    \label{fig:timings_RBC}
\end{figure}

\begin{table}[htbp]
    \footnotesize
    \centering
    \begin{tabular}{|l | c r r r|}\hline
         & $\theta$ & Iter. & Time (s.) & $\|U_\mu-U_{\mu,R}\|_F$\\ \hline 
        $\St(200,30)$ & & & & \\
        Exp  & \multirow{4}{*}{1.54}& 32 &  0.71 & --\\
        PF   & & 29 & 0.033 & 0.026 \\
        PL   & &32 & 0.15 &  0.005 \\
        PL Cay & & 32 & 0.022 & 0.009 \\ 
        Cayley & & 31 & 0.006 & 0.07 \\ \hline
        $\St(700,200)$ & & & & \\
        Exp & \multirow{4}{*}{1.56}&31 &  20.86 &-- \\
        PF   & &30 & 2.27  & 0.003 \\
        PL   & &31 & 2.54 & 7.12e-4\\
        PL Cay & &31 & 0.65 & 0.001\\ 
        Cayley & & 31 & 0.13 & 0.02 \\\hline 
        $\St(1500,400)$ & & & & \\
        Exp &\multirow{4}{*}{1.57} &31 &  118.92& -- \\
        PF   & &30 & 10.26 &  0.002 \\
        PL   & &31 & 10.08 & 3.52e-4\\
        PL Cay & &31 & 3.54 & 6.13e-4 \\ 
        Cayley & & 31 & 0.90 & 0.02 \\ \hline 
    \end{tabular}
    \caption{Results of running Riemannian steepest descent for various dimensions. $\theta=\arccos(\frac{\langle \xi_1,\xi_2\rangle}{\|\xi_1\|_F\|\xi_2\|_F})$ is the angle between the generated tangent vectors used to obtain the two corner points $U_1$ and $U_2$. $U_{\mu}$ is the Riemannian barycenter obtained using \cref{alg:SD_RBC} without replacing the Riemannian logarithm, and $U_{\mu,R}$ is the barycenter obtained using one of the retractions.} 
    \label{tab:RBC_tab}
\end{table}

\section{Conclusions and future work}
\label{sec:conclusions}
By a modification of the classical polar factor retraction on the Stiefel manifold, we introduce a new Stiefel retraction, the polar-light retraction, which is second--order accurate under the Euclidean metric and admits a closed-form inverse. 

The proposed polar--light retraction incurs a higher computational cost in forward evaluation compared to the classical polar factor retraction, mainly owing to an additional $(n\times p)$ times $(p\times p)$ matrix-matrix product in the factor $(U(\exp_m(A)-A) + \xi)$ in \eqref{eq:psi_U_Onp2_efficient}. However, its inverse can be computed more efficiently. This may be beneficial in logarithm--heavy computations, such as when computing Riemannian barycenters in large dimensions, as well as when performing Hermite interpolation, similar to \cite{jensen2025maxvol} for the Grassmann manifold. 

The formulas for the closed-form expression for the inverse polar-light retraction make it explicit how the independent parameters $A\in \Skew(p)$, $B\in \R^{(n-p)\times p}$ of a Stiefel tangent matrix $\xi = \mathbf{Q}\left[\begin{smallmatrix}
    A\\
    B
\end{smallmatrix}\right]$ enters the calculation.
Numerical experiments show that the
polar-light retraction is consistently closer to the associated Stiefel geodesics than the classical polar factor retraction. 
{\em In particular, the contribution of the $A$-block is captured more accurately by the polar‑light retraction.}
(For $A=0$, the polar-light retraction coincides with the full polar factor retraction.) To decrease computational costs, the matrix exponential and matrix logarithm functions  may be consistently replaced with their Cayley approximations without compromising the second-order property of the retraction. Moreover, in light of \Cref{sec:exp_pod}, the inverse polar factor retraction may degenerate quicker than that of the polar-light.

We have implemented all the retractions considered in \Cref{sec:experiments_timing}, except the QR retraction, so that one can obtain efficient $t$--dependent realizations. Apart for interpolation tasks, this could also be beneficial in optimization, if line searching is conducted. It is an open question if this can be done for the QR retraction. 
\\
$\bullet$ If one aims for speed, our numerical experiments indicate that the Cayley retraction \eqref{eq:Cayley_retraction_p} offers an efficient choice, which is also second--order accurate under the canonical metric. The $t$--dependent implementation requires inverting a $p\times p$ matrix, and is less efficient when comparing to e.g. the polar factor of polar-light retraction. \\
$\bullet$ If one aims to approximate the Riemannian exponential map under the Euclidean metric with high precision, the polar--light retraction \eqref{eq:psi_U_Onp2_efficient} or its Cayley-accelerated variant \eqref{eq:varphi_U_Onp2_cay} appears to provide an efficient choice, in particular, if evaluating the inverse retraction is also of interest. They can additionally be implemented for efficient $t$--dependent realizations. \\ 
$\bullet$ We would like to emphasize that we do not advocate to use the Euclidean metric or any other metric as a default choice. The choice of metric is problem-dependent. 

\section*{Acknowledgments}


\end{document}